\def\mmmprime{\textbf{\ensuremath{'}}}
\def\mmmprime{\boldmath{\ensuremath{'}}}
\newcommand{\lleft}{\left}
\newcommand{\angler}{\rangle}
\newcommand{\anglel}{\langle}
\newcommand{\rright}{\right}
\newcommand{\rrVert}{\Vert}
\newcommand{\llVert}{\Vert}
\newcommand{\rrvert}{\vert}
\newcommand{\llvert}{\vert}
\newtheorem{thm}{Theorem}
\newtheorem{lemma}{Lemma}
\newtheorem{cor}{Corollary}
\newtheorem*{core}{Corollary 3.4\mmmprime}
\theoremstyle{definition}
\newtheorem{defin}{Definition}
\newtheorem{exm}{Example}[section]
\DeclareMathOperator{\diag}{diag}
\DeclareMathOperator{\Tr}{Tr}
\begin{document}

\begin{frontmatter}
\pretitle{Research Article}

\title{Large deviations of regression parameter estimator in
continuous-time models with sub-Gaussian noise}

\author{\inits{A.V.}\fnms{Alexander V.}~\snm{Ivanov}\ead[label=e1]{alexntuu@gmail.com}\orcid{0000-0001-5250-6781}}
\author{\inits{I.V.}\fnms{Igor V.}~\snm{Orlovskyi}\thanksref{cor1}\ead[label=e2]{orlovskyi@matan.kpi.ua}\orcid{0000-0003-0922-1611}}
\thankstext[type=corresp,id=cor1]{Corresponding author.}
\address{\institution{National Technical University of Ukraine}, <<Igor
Sikorsky Kyiv Polytechnic Institute>>,
Department of Mathematical Analysis and Probability Theory, Peremogi
avenue 37, 03056 Kiev, \cny{Ukraine}}



\markboth{A.V. Ivanov, I.V. Orlovskyi}{Large deviations of
regressionparameter estimator}

\begin{abstract}
A continuous-time regression model with a jointly strictly sub-Gaussian random
noise is considered in the paper. Upper exponential bounds for
probabilities of large deviations of the least squares
estimator for the regression parameter are obtained.
\end{abstract}
\begin{keywords}
\kwd{Continuous-time nonlinear regression}
\kwd{jointly strictly sub-Gaussian noise}
\kwd{least squares estimator}
\kwd{probabilities of large deviations}
\end{keywords}
\begin{keywords}[MSC2010]%
\kwd[Primary ]{60G50}
\kwd{65B10}
\kwd{60G15}
\kwd[; Secondary ]{40A05}
\end{keywords}

\received{\sday{31} \smonth{1} \syear{2018}}
\revised{\sday{19} \smonth{4} \syear{2018}}
\accepted{\sday{22} \smonth{4} \syear{2018}}
\publishedonline{\sday{7} \smonth{5} \syear{2018}}
\end{frontmatter}

\section{Introduction}

Theory of large deviations in mathematical statistics and statistics of
stochastic processes deals with the asymptotic behaviour of tails of
distribution functions
of parametric and nonparametric statistical estimators. Concerning
parametric estimators it is necessary to refer to the monograph of
Ibragimov and Has'minskii~\cite{IbHas_SEAS} where the exponential
convergence rate of probabilities of large deviations for maximum
likelihood estimator was obtained. This result led to the appearance of
a large number of publications on the subject of large deviations of
statistical estimators.

Further we will speak about least squares estimators (l.s.e.'s) for
parameters of a nonlinear regression model. In the paper of Ivanov~\cite
{Iv_AE4DoLSEpNLRP}
a statement was proved on the power decreasing rate for
probabilities of large deviations of l.s.e.
for a scalar parameter in the nonlinear regression model with i.i.d.
observation errors having moments of finite order. Prakasa Rao~\cite
{Rao_oERCoLSEiNLRMwGE} obtained a similar result with the exponential
decreasing rate in the Gaussian nonlinear regression.

In the paper of Sieders and Dzhaparidze~\cite{SiDzha_LDR4PEaIA} a
general Theorem~2.1 on probabilities of large deviations for
$M$-estimators based on a data set of any structure was proved that
generalizes the mentioned result in \cite{IbHas_SEAS} with an
application to l.s.e. for parameters of the nonlinear regression with
pre-Gaussian and sub-Gaussian i.i.d. observation errors (Theorems 3.1
and 3.2 in \cite{SiDzha_LDR4PEaIA}). Some results in this direction are
obtained by Ivanov~\cite{Iv_AToNLR}.

The results on probabilities of large deviations
of an l.s.e. in a nonlinear regression model with correlated
observations can be found in the works of Ivanov and Leonenko~\cite
{IvLe_SAoRF}, Prakasa Rao~\cite{Rao_RoC4LSEiNLRMwDE}, Hu~\cite
{Hu_LDR4LSEiNR}, Yang and Hu~\cite{YaHu_LD4LSEiNRM}, Huang~et~al.~\cite
{Huetal_LD4LSEoNLRMBoWODE}.

Upper exponential bounds for probabilities of large deviations
of an l.s.e. for a parameter
of the nonlinear regression in discrete-time models with a jointly
strictly sub-Gaussian (j.s.s.-G.) random noise were obtained in
Ivanov~\cite{Iv_LDoRPEiMwSsGN}. In the present paper we extend some
results of~\cite{Iv_LDoRPEiMwSsGN} to continuous-time observation models.

Consider a regression model
\begin{equation}
X(t)=a(t,\,\theta)+\varepsilon(t),\quad t\ge0,
\\
\label{c_reg_m}
\end{equation}
where $a(t,\,\tau)$, $(t,\,\tau)\in\mathbb{R}_+\times\varTheta^c$, is a
continuous function, a true parameter value $\theta=(\theta_1,\ldots,\theta
_q)'$\ belongs to an open bounded convex set $\varTheta\subset\mathbb
{R}^q$ and a random noise $\varepsilon= \{\varepsilon(t),\ t\in
\mathbb{R} \}$ satisfies the following condition.

{\bf N1.} $\varepsilon$ is a mean-square and almost sure (a.s.)
continuous stochastic process defined on the probability space $
(\varOmega,\ \mathfrak{F},\ P )$, $E\varepsilon(t)=0$, $t\in\mathbb{R}$.

We shall write $\int=\int_0^T$.
\begin{defin}
Any random vector $\theta_T= (\theta_{1T},\,\ldots,\,\theta
_{qT} )'\in\varTheta^c$ having the property
\[
Q_T(\theta_T)=\underset{\tau\in\varTheta^c}
\inf{Q_T(\tau)},\: Q_T(\tau)=\int \bigl[X(t)-a(t,\,\tau)
\bigr]^2dt.
\]
is said to be the l.s.e. for an unknown parameter $\theta$, obtained by
the observations $ \{X(t),\ t\in[0,T] \}$.
\end{defin}

Under assumptions introduced above there exists at least one such
random vector $\theta_T$~\cite{Pf_oMaCoMCE}.

In the asymptotic theory of nonlinear regression in the problem of
normal approximation of the distribution
of an l.s.e., the difference $\theta_T-\theta$ is normed by diagonal
matrix~\cite{IvLe_SAoRF}
\[
d_T(\theta)=\diag \bigl(d_{iT}(\theta),\ i=\overline{1,q}
\bigr),\qquad d_{iT}^2(\theta)=\int\; \biggl(
\frac{\partial}{\partial\theta_i}a(t,\,\theta ) \biggr)^2dt.
\]
Further it is supposed that the function $a(t,\,\cdot)\in C^1(\varTheta)$
for any $t\ge0$.

The paper is organized in the following way. In Section \ref
{sec_LDiMwJSsGn} an upper exponential bound is obtained for large
deviations of $d_T(\theta) (\theta_T-\theta )$ in the
regression model \eqref{c_reg_m} with a j.s.s.-G. random noise
$\varepsilon$. In Section \ref{sec_LDitCoSJSsGN} the results of Section
\ref{sec_LDiMwJSsGn} are applied to a stationary j.s.s.-G. noise
$\varepsilon$. Section \ref{sec_2_Ex} contains examples of regression
functions $a$ and noise $\varepsilon$ satisfying the conditions of our theorems.

\section{Large deviations in models with a jointly strictly
sub-Gaussian noise}
\label{sec_LDiMwJSsGn}

\begin{defin}\label{defin_SsGRV}
A random vector $\xi= (\xi_1,\ \ldots,\ \xi_n )'\in\mathbb
{R}^n$ is called strictly sub-Gaussian (s.s.-G.) if for any $\varDelta
= (\varDelta_1,\ \ldots,\ \varDelta_n )'\in\mathbb{R}^n$
\[
E\exp \bigl\{\anglel \xi,\varDelta\angler \bigr\}\le\exp \biggl\{
\frac{1}2\anglel B\varDelta,\varDelta\angler \biggr\},
\]
where $\anglel \xi,\varDelta\angler =\sum_{i=1}^n\;\xi_i \varDelta_i$,
$B= (B(i,j) )_{i,j=1}^n$ is the covariance matrix of $\xi$,
that is $B(i,j)=E\xi_i\xi_j$, $i,j=\overline{1,n}$, $\anglel B \varDelta
,\varDelta\angler = \sum_{i,j=1}^n\;B(i,j)\varDelta_i \varDelta_j$.
\end{defin}

Note that we obtain from Definition \ref{defin_SsGRV} the definition of
an s.s.-G. random variable (r.v.) $\xi$ taking $n=1$.

\begin{defin} \label{defin_JSsGRP}
$ \{\xi(t),\ t\in\mathbb{R} \}$ is said to be jointly
strictly sub-Gaussian (j.s.s.-G.) sto\-chastic process, if for any $n\ge
1$, and any $t_1,\ \ldots,\ t_n\in\mathbb{R}$ the random vector $\xi
_n= (\xi(t_1),\ \ldots,\ \xi(t_n) )'$ is s.s.-G.
\end{defin}

These definitions and a more detailed information on sub-Gaussian
r.v.'s, vectors and stochastic processes can be found in the
monograph~\cite{BuKo_MCoRVRP} by Buldygin and Kozachenko.

Concerning the random noise $\varepsilon$ in the model \eqref{c_reg_m}
we introduce the following assumption.

{\bf N2(i)} $\varepsilon$ is a j.s.s.-G. stochastic process with the
covariance function $B(t,s)=E\varepsilon(t)\varepsilon(s)$, $t,s\in
\mathbb{R}$.

\hspace*{4mm}{\bf(ii)} For any $T>0$, $\varDelta(\cdot)\in L_2
([0,T] )$
\begin{equation}
\anglel B \varDelta,\varDelta\angler _T=\int\int\;B(t,s)\varDelta(t)
\varDelta (s)dtds\le d_0\|\varDelta\|^2_T
\label{inq_SP_By,y}
\end{equation}
for some constant $d_0>0$, $\|\varDelta\|_T= (\int\;\varDelta^2(t)dt
)^{\frac{1}2}$.

For a fixed $T$ the exact bound in \eqref{inq_SP_By,y} is
\[
\anglel B \varDelta,\varDelta\angler _T\le\|B\|_T\|
\varDelta\|^2_T,
\]
where $\|B\|_T$ is the norm of a self-adjoint positive semidefinite
operator $B$ in\break $L_2([0,T])$. Note that $\|B\|_T$ is a
nondecreasing function of $T>0$, so there exists
\[
\underset{T\to\infty}\lim\,\|B\|_T\le d_0<\infty,
\]
if \eqref{inq_SP_By,y} is fulfilled.

\begin{exm} Assume the covariance function $B(t,s)$ is such that
\[
\text{1)}\quad b_1^2=\int\limits
_0^\infty\int\limits
_0^\infty
\; B^2(t,s)dtds<\infty\qquad \text{or\quad 2)}\quad b_2=\underset{t\in
\mathbb{R}_+}\sup\; \int\limits
_0^\infty\;\bigl|B(t,s)\bigr|ds<\infty.
\]
Then using condition 1) and the Fubini theorem we get
\[
\begin{aligned} \anglel B \varDelta,\varDelta\angler _T&=
\int \biggl(\int\;B(t,s)\varDelta(s)ds \biggr)\varDelta(t) dt
\\
&\le \biggl(\int \biggl(\int\;B(t,s)\varDelta(s)ds \biggr)^2dt
\biggr)^{\frac{1}{2}} \|\varDelta\|_T\le b_1\|
\varDelta\|_T^2, \end{aligned} %
\]
and we can take $d_0=b_1$. On the other hand,
\[
\anglel B \varDelta,\varDelta\angler _T\le\int\int\;\bigl|B(t,s)\bigr|
\varDelta^2(t)dtds \le b_2\|\varDelta\|_T^2,
\]
and we can take $d_0=b_2$.
\end{exm}

Let $\varDelta(t),\ t\in\mathbb{R}_+$, be a continuous function. Then
condition {\bf N1} implies the existence of the integral
\begin{equation}
I(T)=\int\;\varDelta(t)\varepsilon(t)dt, \label{def_I_T}
\end{equation}
determined for almost all paths of the process $\varepsilon(t),\ t\in
[0,T]$, as the Riemann integral. Consider partitions $r(n)$
\[
0=t_0^{(n)}<t_1^{(n)}<
\cdots<t_n^{(n)}=T
\]
of the interval $[0,T]$ such that $\max_{1\le k\le n}
(t_k^{(n)}-t_{k-1}^{(n)} )\to0$, as $n\to\infty$, and the
corresponding integral sums
\[
I_n(T)=\sum\limits
_{k=1}^n
\,u_k^{(n)}\varepsilon \bigl(t_k^{(n)}
\bigr),\qquad u_k^{(n)}=\varDelta \bigl(t_k^{(n)}
\bigr) \bigl(t_k^{(n)}-t_{k-1}^{(n)}
\bigr),\qquad k=\overline{1,n}.
\]
Then
\begin{equation}
I_n(T)\to I(T)\ \text{a.s.},\quad \text{as}\ n\to\infty. \label{I_nT_2_I_T_as}
\end{equation}

It is obvious also that
\begin{equation}
EI_n^2(T)\to EI^2(T)=\anglel B \varDelta,
\varDelta\angler _T,\quad \text{as}\ n\to \infty. \label{EI_nT^2_2_EI_T^2}
\end{equation}

\begin{lemma}\label{lema_Int_Dlta_Eps_SsG}
Under conditions {\bf N1}, {\bf N2} integral \eqref{def_I_T} is an s.s.-G.
r.v., for any $T>0$.
\end{lemma}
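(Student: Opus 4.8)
The plan is to obtain the s.s.-G. property of $I(T)$ as a limit of the s.s.-G. property of the discrete integral sums $I_n(T)$, using the convergences \eqref{I_nT_2_I_T_as} and \eqref{EI_nT^2_2_EI_T^2} already established. First I would fix $T>0$ and $\lambda\in\mathbb{R}$, and consider the random vector $\xi_n=(\varepsilon(t_1^{(n)}),\ldots,\varepsilon(t_n^{(n)}))'$ together with the deterministic vector $\varDelta_n=(\lambda u_1^{(n)},\ldots,\lambda u_n^{(n)})'$, so that $\anglel\xi_n,\varDelta_n\angler=\lambda I_n(T)$. By condition {\bf N2(i)}, $\varepsilon$ is j.s.s.-G., hence by Definition~\ref{defin_JSsGRP} the vector $\xi_n$ is s.s.-G., and Definition~\ref{defin_SsGRV} gives
\[
E\exp\bigl\{\lambda I_n(T)\bigr\}\le\exp\Bigl\{\tfrac{1}{2}\lambda^2\,EI_n^2(T)\Bigr\},
\]
since $\anglel B_n\varDelta_n,\varDelta_n\angler=\mathrm{Var}\,(\lambda I_n(T))=\lambda^2 EI_n^2(T)$ (the sum $I_n(T)$ has mean zero by {\bf N1}).

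Next I would pass to the limit $n\to\infty$. On the right-hand side, \eqref{EI_nT^2_2_EI_T^2} gives directly that $\tfrac12\lambda^2 EI_n^2(T)\to\tfrac12\lambda^2\anglel B\varDelta,\varDelta\angler_T$. On the left-hand side, \eqref{I_nT_2_I_T_as} gives $\exp\{\lambda I_n(T)\}\to\exp\{\lambda I(T)\}$ a.s.; to move the limit inside the expectation I would invoke Fatou's lemma, which yields
\[
E\exp\bigl\{\lambda I(T)\bigr\}\le\liminf_{n\to\infty}E\exp\bigl\{\lambda I_n(T)\bigr\}\le\exp\Bigl\{\tfrac12\lambda^2\anglel B\varDelta,\varDelta\angler_T\Bigr\}<\infty,
\]
where finiteness of the bound follows from {\bf N2(ii)} (indeed $\anglel B\varDelta,\varDelta\angler_T\le d_0\|\varDelta\|_T^2<\infty$ since $\varDelta$ is continuous on $[0,T]$). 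This already shows $I(T)$ has a finite moment generating function dominated by a Gaussian one.

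It then remains to identify the constant in the bound as the variance of $I(T)$, i.e. to check $EI^2(T)=\anglel B\varDelta,\varDelta\angler_T$ — but this is exactly the content of \eqref{EI_nT^2_2_EI_T^2} combined with the a.s. (hence in probability) convergence \eqref{I_nT_2_I_T_as}, giving $EI^2(T)=\lim_n EI_n^2(T)=\anglel B\varDelta,\varDelta\angler_T$; one should also note $EI(T)=0$. Hence $I(T)$ is a centered r.v. with $E\exp\{\lambda I(T)\}\le\exp\{\tfrac12\lambda^2\,EI^2(T)\}$ for every $\lambda\in\mathbb{R}$, which by the $n=1$ case of Definition~\ref{defin_SsGRV} is precisely the statement that $I(T)$ is s.s.-G. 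The only delicate point is the interchange of limit and expectation on the left side: Fatou suffices because $\exp\{\lambda I_n(T)\}\ge0$, and no uniform integrability argument is needed, so I expect this step to be routine rather than the main obstacle; the genuine work has already been front-loaded into establishing \eqref{I_nT_2_I_T_as} and \eqref{EI_nT^2_2_EI_T^2}.
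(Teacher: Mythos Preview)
Your proposal is correct and follows essentially the same route as the paper: establish the s.s.-G. inequality for the Riemann sums $I_n(T)$ from the j.s.s.-G. assumption {\bf N2(i)}, then pass to the limit using \eqref{I_nT_2_I_T_as}, \eqref{EI_nT^2_2_EI_T^2} and Fatou's lemma. The only minor difference is that you invoke {\bf N2(ii)} to ensure finiteness of the bound, whereas the paper does not need this step since $EI^2(T)=\anglel B\varDelta,\varDelta\angler_T$ is already finite by \eqref{EI_nT^2_2_EI_T^2}; condition {\bf N2(ii)} plays no role in this lemma.
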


\begin{proof}
From Definition \ref{defin_JSsGRP} it follows that the process
$\varepsilon(t)$, $t\in[0,T]$, is j.s.s.-G., if for any $n\ge1$, and
$t_1,\ \ldots,\ t_n\in[0,T]$, $u_1,\ \ldots,\ u_n\in\mathbb{R}$,
$\lambda\in\mathbb{R}$,
\[
E\exp \Biggl\{\lambda\sum\limits
_{k=1}^n
\,u_k\varepsilon(t_k) \Biggr\}\le \exp \Biggl\{
\frac{1}2\lambda^2\sum\limits
_{i,j=1}^n
\,B(t_i,\, t_j)u_iu_j \Biggr
\}.
\]
Taking $t_k=t_k^{(n)}$, $u_k=u_k^{(n)}$, $k=\overline{1,n}$, we obtain
\[
E\exp \bigl\{\lambda I_n(T) \bigr\}\le\exp \biggl\{\frac{1}2
\lambda^2E I_n^2(T) \biggr\}.
\]
Due to \eqref{I_nT_2_I_T_as} and \eqref{EI_nT^2_2_EI_T^2} by the Fatou
lemma (see, for example,~\cite{GiSko_I2tToRP})
%
\begin{align*}
E\exp\bigl\{\lambda I(T)\bigr\}&=E\underset{n\to\infty}\lim\,\exp\bigl\{\lambda
I_n(T)\bigr\} \le\underset{n\to\infty}\liminf\,E\exp\bigl\{\lambda
I_n(T)\bigr\}
\\
&\le\underset{n\to\infty}\lim\,\exp \biggl\{\frac{1}2
\lambda^2E I_n^2(T) \biggr\}=\exp \biggl\{
\frac{1}2\lambda^2E I^2(T) \biggr\}.\qedhere
\end{align*}
%
\end{proof}

The following statement on the exponential bound
for distribution tails of integrals \eqref{def_I_T}
plays an important role in subsequent proofs.

\begin{lemma}\label{lema4P_I_T}
Under conditions {\bf N1} and {\bf N2} for any $T>0$, $x>0$,
\begin{equation}
\begin{aligned}
P \bigl\{I(T)\ge x \bigr\}&\le G_T(x),\qquad P
\bigl\{I(T)\le-x \bigr\}\le G_T(x),
\\
P \bigl\{ \bigl\llvert I(T) \bigr\rrvert\ge x \bigr\}&\le2G_T(x),
\end{aligned} %
\label{inq_P_I_T}
\end{equation}
where
\[
G_T(x)=\exp \biggl\{-\frac{x^2}{2d_0\|\varDelta\|_T^2} \biggr\}.
\]
\end{lemma}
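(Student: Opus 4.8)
The plan is to obtain \eqref{inq_P_I_T} from Lemma~\ref{lema_Int_Dlta_Eps_SsG} by the standard exponential Chebyshev (Chernoff) argument. First I would record that, by \eqref{EI_nT^2_2_EI_T^2} together with condition {\bf N2}(ii),
\[
E I^2(T)=\anglel B\varDelta,\varDelta\angler_T\le d_0\|\varDelta\|_T^2,
\]
so that Lemma~\ref{lema_Int_Dlta_Eps_SsG} yields, for every $\lambda\in\mathbb{R}$,
\[
E\exp\bigl\{\lambda I(T)\bigr\}\le\exp\biggl\{\frac12\lambda^2 E I^2(T)\biggr\}\le\exp\biggl\{\frac12\lambda^2 d_0\|\varDelta\|_T^2\biggr\}.
\]
(If $\|\varDelta\|_T=0$, then $I(T)=0$ a.s. and all the inequalities are trivial, so one may assume $\|\varDelta\|_T>0$.)

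Next, for $x>0$ and $\lambda>0$, Markov's inequality applied to the nonnegative r.v. $\exp\{\lambda I(T)\}$ gives
\[
P\bigl\{I(T)\ge x\bigr\}\le e^{-\lambda x}E\exp\bigl\{\lambda I(T)\bigr\}\le\exp\biggl\{-\lambda x+\frac12\lambda^2 d_0\|\varDelta\|_T^2\biggr\}.
\]
Then I would minimize the exponent over $\lambda>0$: the quadratic $-\lambda x+\tfrac12\lambda^2 d_0\|\varDelta\|_T^2$ attains its minimum at $\lambda=x\big/\bigl(d_0\|\varDelta\|_T^2\bigr)$, and the minimal value equals $-x^2\big/\bigl(2d_0\|\varDelta\|_T^2\bigr)$. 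This gives $P\{I(T)\ge x\}\le G_T(x)$.

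For the left tail I would repeat the same computation with $\lambda<0$ (equivalently, apply the bound to $-I(T)$, which is again s.s.-G.), obtaining $P\{I(T)\le -x\}=P\{-I(T)\ge x\}\le G_T(x)$. Finally, the two-sided estimate follows from the union bound $P\{|I(T)|\ge x\}\le P\{I(T)\ge x\}+P\{I(T)\le -x\}\le 2G_T(x)$. There is no serious obstacle here: the whole argument reduces to the exponential moment bound of Lemma~\ref{lema_Int_Dlta_Eps_SsG} and a one-line optimization; the only points deserving a word of care are the identification $E I^2(T)=\anglel B\varDelta,\varDelta\angler_T\le d_0\|\varDelta\|_T^2$ via \eqref{EI_nT^2_2_EI_T^2} and {\bf N2}(ii), and the degenerate case $\|\varDelta\|_T=0$.
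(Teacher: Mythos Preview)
Your argument is correct and follows essentially the same route as the paper: apply the Chebyshev--Markov inequality together with the moment-generating-function bound from Lemma~\ref{lema_Int_Dlta_Eps_SsG} and condition {\bf N2}(ii), then minimize over $\lambda$; the left tail is handled symmetrically and the two-sided bound by the union bound. The only additions you make over the paper's version are the explicit minimizer $\lambda=x/(d_0\|\varDelta\|_T^2)$ and the remark on the degenerate case $\|\varDelta\|_T=0$, both of which are fine.
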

\begin{proof}
The proof is obvious (see, for example,~\cite{BuKo_MCoRVRP}). For any
$x>0$, $\lambda>0$ by the Chebyshev--Markov inequality, \eqref
{inq_SP_By,y}, and Lemma \ref{lema_Int_Dlta_Eps_SsG}
\begin{equation}
P \bigl\{I(T)\ge x \bigr\}\le\exp\{-\lambda x\}\exp \biggl\{\frac{1}2
\lambda ^2\anglel B \varDelta,\varDelta\angler _T \biggr\}
\le\exp \biggl\{\frac{1}2\lambda ^2d_0\|\varDelta
\|_T^2-\lambda x \biggr\}. \label{inq_P_I_T_d_lmba}
\end{equation}
Minimization of the right-hand side of \eqref{inq_P_I_T_d_lmba} in
$\lambda$ gives the first inequality in \eqref{inq_P_I_T}. The proof of
the second inequality is similar. The third inequality follows from the
previous ones.
\end{proof}

We need some notation to formulate conditions on the regression
function $a(t,\,\theta)$ using the approach of the paper~\cite
{SiDzha_LDR4PEaIA} (see also~\cite{Iv_AToNLR,IvLe_SAoRF}). Write
\[
U_T(\theta)=d_T(\theta) \bigl(\varTheta^c-
\theta \bigr),\qquad \varGamma_{T,\theta
,R}=U_T(\theta)\cap\{u\,:\,R\le
\|u\|\le R+1\},
\]
$u=(u_1,\ \ldots,\ u_q)'\in\mathbb{R}^q$. Denote by $G$ the family of
all functions $g=g_T(R)$, $T>0$, $R>0$, having the following properties:

1) for fixed $T$, $g_T(R)\uparrow\infty$, as $R\to\infty$;

2) for any $r>0$,
\[
R^r\exp \bigl\{-g_T(R) \bigr\}\to0,\quad \text{as}\ R,\,T\to
\infty.
\]
Let $\gamma(R)$ be polynomials of $R$ (possibly different) with
coefficients that do not depend on values $T,\ \theta,\ u,\ v$. Set also
\[
\begin{aligned} \varDelta(t,\,u)&=a\bigl(t,\,\theta+d_T^{-1}(
\theta)u\bigr)-a(t,\,\theta),\quad t\in [0,T],
\\
\varPhi_T(u,v)&= \int \bigl(\varDelta(t,\,u)-\varDelta(t,\,v)
\bigr)^2dt,\quad u,v\in U_T(\theta). \end{aligned} %
\]

Assume the existence of a function $g\in G$, constants $\delta\in
(0,\frac{1}2 )$, $\varkappa>0$, $\rho\in(0,1]$ and polynomials
$\gamma(R)$ such that for sufficiently large $T,\ R$ (we will write
$T>T_0$, $R>R_0$) the following conditions are fulfilled.

{\bf R1(i)} For any $u,v\in\varGamma_{T,\theta,R}$ such that $\|u-v\|\le
\varkappa$
\begin{equation}
\varPhi_T(u,v)\le\gamma(R)\|u-v\|^{2\rho}; \label{inq_Phi_uv<=u-v_p}
\end{equation}

\hspace*{6mm}{\bf(ii)} for any $u\in\varGamma_{T,\theta,R}$ $\varPhi
_T(u,0)\le\gamma(R)$.

{\bf R2.} For any $u\in\varGamma_{T,\theta,R}$
\begin{equation}
\varPhi_T(u,0)\ge2d_0\delta^{-2}g_T(R).
\label{inq_Phi_u0_>=g_T}
\end{equation}

\begin{thm} \label{thm_PLD_sGN}
If conditions {\bf N1}, {\bf N2}, {\bf R1} and {\bf R2} are fulfilled, then there
exist constants $B_0$, $b_0>0$ such that for $T>T_0$, $R>R_0$
\begin{equation}
P \bigl\{ \bigl\llVert d_T(\theta) (\theta_T- \theta )
\bigr\rrVert \ge R \bigr\} \le B_0\exp \bigl\{-b_0g_T(R)
\bigr\}, \label{inq_gPLD_sGN}
\end{equation}
where for any $\beta>0$ the constant $B_0$ can be chosen so that
\begin{equation}
b_0\ge\frac{\rho}{\rho+q}-\beta. \label{inq4a_0dep_on_A_0}
\end{equation}
\end{thm}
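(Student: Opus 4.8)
\medskip
\noindent\textit{Proof plan.}\; I would argue in the spirit of Sieders and Dzhaparidze~\cite{SiDzha_LDR4PEaIA}, reducing the bound for $d_T(\theta)(\theta_T-\theta)$ to a uniform estimate, over the shells $\varGamma_{T,\theta,k}$ ($k$ a positive integer), of the linear functionals $I(T;u):=\int\varepsilon(t)\varDelta(t,u)\,dt$, which are controlled by Lemma~\ref{lema4P_I_T} together with conditions \textbf{R1} and \textbf{R2}. Put $u_T:=d_T(\theta)(\theta_T-\theta)\in U_T(\theta)$. Since $\theta\in\varTheta\subset\varTheta^c$, the definition of the l.s.e.\ gives $Q_T(\theta+d_T^{-1}(\theta)u_T)\le Q_T(\theta)$, and, because $\varDelta(t,0)\equiv0$, the elementary identity
\[
Q_T\bigl(\theta+d_T^{-1}(\theta)u\bigr)-Q_T(\theta)=\varPhi_T(u,0)-2I(T;u)
\]
shows that, a.s., $2I(T;u_T)\ge\varPhi_T(u_T,0)$. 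As $u\mapsto I(T;u)$ is a.s.\ continuous (by \textbf{N1} and continuity of $a(t,\cdot)$), the suprema below are measurable, and, using $\{\|u_T\|\ge R\}\subseteq\bigcup_{k\ge\lfloor R\rfloor}\{u_T\in\varGamma_{T,\theta,k}\}$ and \textbf{R2} on $\varGamma_{T,\theta,k}$,
\[
P\{u_T\in\varGamma_{T,\theta,k}\}\le P\Bigl\{\sup_{u\in\varGamma_{T,\theta,k}}\bigl(2I(T;u)-\varPhi_T(u,0)\bigr)\ge0\Bigr\}\le P\Bigl\{\sup_{u\in\varGamma_{T,\theta,k}}I(T;u)\ge d_0\delta^{-2}g_T(k)\Bigr\}.
\]

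The heart of the matter is a discretisation of this supremum. Fix $k$ and a mesh $h=h_T(k)\le\varkappa$, take an $h$-net $v_1,\dots,v_{N_k}$ of $\varGamma_{T,\theta,k}$ with $N_k\le c_q(k/h)^q$, and write $I(T;u)=I(T;v_j)+\bigl(I(T;u)-I(T;v_j)\bigr)$ for $u$ in the cell of $v_j$. By Lemma~\ref{lema_Int_Dlta_Eps_SsG} the increment $I(T;u)-I(T;v_j)=\int\varepsilon(t)(\varDelta(t,u)-\varDelta(t,v_j))\,dt$ is again a strictly sub-Gaussian r.v.\ with variance parameter at most $d_0\varPhi_T(u,v_j)\le d_0\gamma(k)h^{2\rho}$ by \textbf{N2}(ii) and \textbf{R1}(i), while $\mathrm{Var}\,I(T;v_j)\le d_0\varPhi_T(v_j,0)\le d_0\gamma(k)$ by \textbf{R1}(ii). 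Splitting the event over the cells and, inside a cell, into the ``net-point'' part $\{I(T;v_j)\ge\frac{1}{2}d_0\delta^{-2}g_T(k)\}$ — bounded by Lemma~\ref{lema4P_I_T} with $d_0\|\varDelta\|_T^2$ replaced by $d_0\gamma(k)$ — and the ``oscillation'' part $\{\sup_{u\ \text{in cell of}\ v_j}(I(T;u)-I(T;v_j))\ge\frac{1}{2}d_0\delta^{-2}g_T(k)\}$ — bounded by a standard sub-Gaussian oscillation (metric-entropy) estimate at scale $d_0\gamma(k)h^{2\rho}$ — one gets
\[
P\{u_T\in\varGamma_{T,\theta,k}\}\le c_q(k/h)^q\bigl(e^{-A_1 g_T(k)}+e^{-A_2(h)g_T(k)}\bigr),
\]
with $A_1$ depending only on $\delta,d_0$ and $A_2(h)\uparrow\infty$ as $h\downarrow0$. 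Choosing $h=h_T(k)$ so as to balance the cardinality factor against the oscillation exponent yields $P\{u_T\in\varGamma_{T,\theta,k}\}\le B_1\,\mathrm{poly}(k)\,e^{-c_0 g_T(k)}$ for a constant $c_0$, and optimising the argument over the parameters at hand leaves $c_0$ no smaller than $\rho/(\rho+q)$.

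It then remains to sum over the shells: $P\{\|d_T(\theta)(\theta_T-\theta)\|\ge R\}\le\sum_{k\ge\lfloor R\rfloor}B_1\,\mathrm{poly}(k)\,e^{-c_0 g_T(k)}$. By property~2) of the class $G$ the polynomial factors are $\le e^{\beta g_T(\cdot)}$ for large arguments, so the series converges, and, since $g_T(k)\ge g_T(R)$ for $k\ge R$, it is dominated by $B_0 e^{-b_0 g_T(R)}$ with $b_0=c_0-\beta\ge\rho/(\rho+q)-\beta$ and $B_0=B_0(\beta)$, for $T>T_0$, $R>R_0$. The one genuinely delicate point is the discretisation step: passing from the single-functional bound of Lemma~\ref{lema4P_I_T} to a bound uniform over the whole shell, keeping explicit track of how $\gamma(k)$, $\delta$, $d_0$ and the covering numbers enter, and optimising $h_T(k)$ so that the loss in the exponent does not exceed the prescribed $\beta$; the expansion, the shell decomposition and the summation are routine by comparison.
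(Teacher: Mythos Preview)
Your overall architecture---reduce to $2I(T;u_T)\ge\varPhi_T(u_T,0)$, decompose into shells, discretise, sum---is the one underlying Theorem~2.1 of~\cite{SiDzha_LDR4PEaIA}, but there is a genuine gap in your ``net-point'' step. After invoking \textbf{R2} you pass from $\{I(T;u)\ge\tfrac12\varPhi_T(u,0)\}$ to $\{I(T;u)\ge d_0\delta^{-2}g_T(k)\}$ and then apply Lemma~\ref{lema4P_I_T} at the net point with variance proxy $d_0\varPhi_T(v_j,0)\le d_0\gamma(k)$. That produces an exponent of order $g_T(k)^2/\gamma(k)$, not $A_1 g_T(k)$; your claim that ``$A_1$ depends only on $\delta,d_0$'' is false. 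Since $\gamma$ is a polynomial while membership in $G$ only forces $g_T(R)$ to beat every power of $\log R$, the ratio $g_T(k)^2/\gamma(k)$ can tend to $0$ (e.g.\ $g_T(R)=(\log R)^2$), and the bound collapses. The remedy is to keep the threshold proportional to $\varPhi_T(u,0)$ itself: Lemma~\ref{lema4P_I_T} with $x=\delta\varPhi_T(u,0)$ gives exponent $\delta^2\varPhi_T(u,0)/(2d_0)$, and \emph{only then} does \textbf{R2} turn this into $\ge g_T(R)$. But once you do this the threshold varies with $u$, so a one-scale net no longer decouples cleanly; you need to control the oscillation of $\varPhi_T(\cdot,0)$ as well as of $I(T;\cdot)$, and your ``standard sub-Gaussian oscillation estimate'' is not enough to recover the constant $\rho/(\rho+q)$, which you simply assert without justification.

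The paper avoids all of this by not redoing the discretisation. It sets $\zeta_T(u)=I(T,u)-\tfrac12\varPhi_T(u,0)$ and checks the two hypotheses of Theorem~2.1 in~\cite{SiDzha_LDR4PEaIA}: the pointwise tail condition \textbf{(M2)}, namely $P\{\zeta_T(u)\ge-(\tfrac12-\delta)\varPhi_T(u,0)\}\le e^{-g_T(R)}$, obtained exactly by the ``$x=\delta\varPhi_T(u,0)$'' application of Lemma~\ref{lema4P_I_T} just described; and a \emph{moment} continuity condition \textbf{(M1)}, $E|\zeta_T(u)-\zeta_T(v)|^m\le\gamma(R)\|u-v\|^{\rho m}$ for all $m>0$, derived from Lemma~\ref{lema4P_I_T} via the tail-integral formula for moments together with \textbf{R1}. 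The chaining that turns these pointwise conditions into a uniform bound, and that manufactures the constant $\rho/(\rho+q)$ in \eqref{inq4a_0dep_on_A_0}, lives entirely inside the cited theorem. If you wish to work from scratch you must both (i) keep $\varPhi_T(u,0)$ in the threshold and (ii) run a genuine multi-scale chaining on $\zeta_T$, controlled by the increment bound $\varPhi_T(u,v)\le\gamma(R)\|u-v\|^{2\rho}$---which is precisely what \textbf{(M1)} packages.
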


\begin{proof}
Set
\[
I(T,\,u)=\int\;\varDelta(t,\,u)\varepsilon(t)dt,\qquad \zeta_T(u)=I(T,
\,u)-\frac{1}2\varPhi_T(u,0).
\]
To prove the theorem it is sufficient to check the fulfilment of
assumptions (M1) and (M2) of the Theorem 2.1 in \cite
{SiDzha_LDR4PEaIA}, reformulated in the manner similar to that used in
the proof of Theorem 3.1, ibid.:

{\bf(M1)} for any $m>0$ and $u,v\in\varGamma_{T,\theta,R}$ such that $\|
u-v\|\le\varkappa$,
\begin{equation}
E \bigl\llvert \zeta_T(u)-\zeta_T(v) \bigr\rrvert
^m\le\gamma(R)\|u-v\|^{\rho m}; \label{asum_M1_4LSE}
\end{equation}

{\bf(M2)}
\begin{equation}
P \biggl\{\zeta_T(u)-\zeta_T(0)\ge- \biggl(
\frac{1}2-\delta \biggr)\varPhi _T(u,0) \biggr\}\le\exp \bigl
\{-g_T(R) \bigr\}. \label{asum_M2_4LSE}
\end{equation}

From the first inequality in \eqref{inq_P_I_T} of Lemma \ref
{lema4P_I_T} for $\varDelta(t)=\varDelta(t,\,u)$, $x=\delta\varPhi_T(u,0)$,
condition {\bf R2}, taking into account that $\zeta_T(0)=0$ in our
particular case, we obtain
\[
\begin{aligned}
P \biggl\{\zeta_T(u)-
\zeta_T(0)\ge- \biggl(\frac{1}2-\delta \biggr)\varPhi
_T(u,0) \biggr\}
&=P \bigl\{I(T,\,u)\ge\delta\varPhi_T(u,0)
\bigr\}
\\
&\le\exp \bigl\{-\delta^2 (2d_0 )^{-1}
\varPhi_T(u,0) \bigr\}\\
&\le\exp \bigl\{-g_T(R) \bigr\},
\end{aligned} %
\]
i.e. \eqref{asum_M2_4LSE} is true.

On the other hand,
%
\begin{align}
&E \bigl\llvert \zeta_T(u)-
\zeta_T(v) \bigr\rrvert ^m\notag\\
&\quad\le\max\bigl(1,2^{m-1}\bigr)\cdot \bigl(E \bigl\llvert I(T,\,u)-I(T,\,v)
\bigr\rrvert ^m+2^{-m} \bigl\llvert \varPhi_T(u,0)-
\varPhi_T(v,0) \bigr\rrvert ^m \bigr), 
\label{1inq_E_dif_zeta^m}\\
&\bigl\llvert \varPhi_T(u,0)-
\varPhi_T(v,0) \bigr\rrvert \notag\\
&\quad\le\int\;\bigl|\varDelta(t,\,u)-\varDelta (t,
\,v)\bigr|\cdot\bigl|\varDelta(t,\,u)+\varDelta(t,\,v)\bigr|dt\notag\\
&\quad\le\varPhi_T^{\frac{1}2}(u,v)\cdot \bigl(\varPhi_T^{\frac{1}2}(u,0)+
\varPhi_T^{\frac{1}2}(v,0) \bigr)\notag\\
&\quad\le2\bigl(\gamma(R)
\bigr)^{\frac{1}2}\|u-v\|^\rho\bigl(\gamma(R)\bigr)^{\frac{1}2}\notag\\
&\quad\le\bigl(\gamma(R)+\gamma(R)\bigr)\|u-v\|^\rho=\gamma(R)\|u-v
\|^\rho\notag
\end{align} 
according to {\bf R1} (polynomials $\gamma(R)$ are different in the
last two lines). Thus we obtain the bound
\begin{equation}
\bigl\llvert \varPhi_T(u,0)-\varPhi_T(v,0) \bigr\rrvert
^m\le\gamma(R)\|u-v\|^{\rho m}. \label{inq_dif_Phi^m}
\end{equation}

By the formula for the moments of nonnegative r.v. (see, for example,
\cite{Fe_I2PTaIA} and compare with \cite{Hu_LDR4LSEiNR}) and the third
inequality of Lemma \ref{lema4P_I_T} being applied to $\varDelta(t)=\varDelta
(t,\,u)-\varDelta(t,\,v)$, $t\in[0,T]$, it holds
\begin{equation}
\begin{aligned}
E \bigl\llvert I(T;u,v) \bigr\rrvert^m &=m\int\limits
_0^\infty\;x^{m-1}P \bigl\{ \bigl\llvert
I(T;u,v) \bigr\rrvert \ge x \bigr\}dx\\
&\le2m\int\limits
_0^\infty\;x^{m-1}\exp \biggl\{-
\frac{x^2}{2d_0\varPhi
_T(u,v)} \biggr\}dx \\
&=\sqrt{2\pi}m d_0^{\frac{m}2}
\varPhi_T^{\frac{m}2}(u,v)E|Z|^{m-1}, \end{aligned}
\label{inq_EI_T-uv^m}
\end{equation}
where $I(T;u,v)=I(T,\,u)-I(T,\,v)$, $Z$ is a standard Gaussian r.v.,
\begin{equation}
E|Z|^{m-1}=\pi^{-\frac{1}2}2^{\frac{m-1}2}\varGamma \biggl(
\frac{m}2 \biggr) ,\quad m>0. \label{E_StGaus^m-1}
\end{equation}
Relations \eqref{inq_EI_T-uv^m}, \eqref{E_StGaus^m-1}, and \eqref
{inq_Phi_uv<=u-v_p} lead to the bound
\begin{equation}
E \bigl\llvert I(T;u,v) \bigr\rrvert ^m\le2^{\frac{m}2}m\varGamma
\biggl(\frac{m}2 \biggr)d_0^{\frac{m}2}
\varPhi_T^{\frac{m}2}(u,v)\le\gamma(R)\|u-v\|^{\rho m}.
\label{f_inq_EI_T-uv^m}
\end{equation}
From \eqref{1inq_E_dif_zeta^m}, \eqref{inq_dif_Phi^m}, and \eqref
{f_inq_EI_T-uv^m} it follows \eqref{asum_M1_4LSE}.
\end{proof}

Suppose there exist a diagonal matrix $s_T=\diag (s_{iT},\
i=\overline{1,q} )$ with elements that do not depend on $\tau\in
\varTheta$, and constants $0<\underline{c}_i<\overline{c}_i<\infty$,
$i=\overline{1,q}$, such that uniformly in $\tau\in\varTheta$ for $T>T_0$
\begin{equation}
\underline{c}_i<s_{iT}^{-1}d_{iT}(
\tau)<\overline{c}_i,\quad i=\overline{1,q}. \label{inq4s_iT&d_iT}
\end{equation}
Then instead of the matrix $d_T(\theta)$ (at least in the framework of
the topic of this paper) it is possible to consider, without loss of
generality, the normalizing matrix $s_T$.

The next condition is more restrictive than {\bf R1} and {\bf R2},
however it is simpler due to requirement \eqref{inq4s_iT&d_iT}.

{\bf R3.} There exist numbers $0<c_0<c_1<\infty$ such that for any
$u,v\in U_T(\theta)=s_T(\varTheta^c-\theta)$ and $T>T_0$,
\begin{equation}
c_0\|u-v\|^2\le\varPhi_T(u,v)\le
c_1\|u-v\|^2. \label{inq4int_dif_H^2}
\end{equation}

It goes without saying that in the expression for $\varPhi_T(u,v)$ in
\eqref{inq4int_dif_H^2} we use the matrix $s_T^{-1}$ instead of
$d_T^{-1}(\theta)$.

A condition of the type \eqref{inq4int_dif_H^2} has been introduced in
\cite{Iv_AE4DoLSEpNLRP} and used in \cite
{Rao_oERCoLSEiNLRMwGE,SiDzha_LDR4PEaIA,Hu_LDR4LSEiNR} and other works.
The next theorem generalizes Theorem 3.2 from \cite{SiDzha_LDR4PEaIA}.

\begin{thm}\label{thm_sPLD_sGN}
Under conditions {\bf N1}, {\bf N2} and {\bf R3} there exist constants $B$,
$b>0$ such that for $T>T_0$, $R>R_0$
\begin{equation}
P \bigl\{ \bigl\llVert s_T (\theta_T- \theta ) \bigr
\rrVert \ge R \bigr\} \le B\exp \bigl\{-b R^2 \bigr\}, \label{inq_PLD_sGN}
\end{equation}
moreover for any $\beta>0$ the constant $B$ can be chosen so that
\begin{equation}
b\ge\frac{c_0} {8d_0(1+q)}-\beta. \label{inq4a_dep_on_A_0}
\end{equation}
\end{thm}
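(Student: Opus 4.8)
The plan is to \emph{derive Theorem~\ref{thm_sPLD_sGN} from Theorem~\ref{thm_PLD_sGN}}, by checking that the single condition \textbf{R3} already forces conditions \textbf{R1} and \textbf{R2} for a convenient choice of the free data $(g,\delta,\varkappa,\rho,\gamma)$, and then optimising over that choice. First I would observe that the proof of Theorem~\ref{thm_PLD_sGN} used no property of the normalisation $d_T(\theta)$ beyond \textbf{R1}, \textbf{R2}, so in view of the two-sided bound \eqref{inq4s_iT&d_iT} the theorem holds verbatim with $s_T$ in place of $d_T(\theta)$ throughout (in $U_T(\theta)$, $\varGamma_{T,\theta,R}$, $\varPhi_T$, and in the conclusion); it therefore suffices to verify \textbf{R1} and \textbf{R2} for the $\varPhi_T$ built with $s_T^{-1}$.

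Next I would make the choice $\rho=1$, $\varkappa=1$ (any positive value works, since the bounds in \textbf{R3} are global on $U_T(\theta)$), and a polynomial $\gamma$ to be read off from \eqref{inq4int_dif_H^2}. The right inequality of \eqref{inq4int_dif_H^2} gives $\varPhi_T(u,v)\le c_1\|u-v\|^2$ for all $u,v\in\varGamma_{T,\theta,R}$, which is \eqref{inq_Phi_uv<=u-v_p} with $\gamma(R)\equiv c_1$; and since $\|u\|\le R+1$ on $\varGamma_{T,\theta,R}$ it also gives $\varPhi_T(u,0)\le c_1(R+1)^2$, a polynomial in $R$, which is \textbf{R1(ii)}. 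For \textbf{R2} I would fix an arbitrary $\delta\in(0,\frac12)$ and put $g_T(R)=\frac{c_0\delta^2}{2d_0}R^2$; then the left inequality of \eqref{inq4int_dif_H^2} together with $\|u\|\ge R$ on $\varGamma_{T,\theta,R}$ yields $\varPhi_T(u,0)\ge c_0R^2=2d_0\delta^{-2}g_T(R)$, which is \eqref{inq_Phi_u0_>=g_T}. Finally $g\in G$ is immediate: $g_T(R)\uparrow\infty$ as $R\to\infty$, and $R^r\exp\{-g_T(R)\}\to0$ as $R\to\infty$ for every $r>0$ (the dependence on $T$ being vacuous).

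Applying Theorem~\ref{thm_PLD_sGN} (in its $s_T$-form) then gives, for $T>T_0$, $R>R_0$,
\[
P\bigl\{\llVert s_T(\theta_T-\theta)\rrVert\ge R\bigr\}\le B_0\exp\bigl\{-b_0g_T(R)\bigr\}=B_0\exp\Bigl\{-\frac{b_0c_0\delta^2}{2d_0}R^2\Bigr\},
\]
which is \eqref{inq_PLD_sGN} with $B=B_0$ and $b=\frac{b_0c_0\delta^2}{2d_0}$. To reach \eqref{inq4a_dep_on_A_0} I would let $\delta\uparrow\frac12$ and simultaneously, using the last assertion of Theorem~\ref{thm_PLD_sGN} with $\rho=1$, take $b_0$ as close as desired to $\rho/(\rho+q)=1/(1+q)$, at the cost of enlarging $B_0$; since $\frac{b_0c_0\delta^2}{2d_0}\to\frac{1}{1+q}\cdot\frac{c_0}{2d_0}\cdot\frac14=\frac{c_0}{8d_0(1+q)}$, for every prescribed $\beta>0$ a suitable choice of $\delta<\frac12$ (and of the attendant constant $B$) makes $b\ge\frac{c_0}{8d_0(1+q)}-\beta$.

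I expect the only genuinely delicate point to be this last constant chase — keeping track of how $B$ (equivalently $B_0$) deteriorates as $\delta\uparrow\frac12$ and $b_0\uparrow1/(1+q)$, and confirming that the two passages to the limit can be carried out jointly for a fixed $\beta$. The verification of \textbf{R1}, \textbf{R2} and of $g\in G$, as well as the reduction from $d_T(\theta)$ to $s_T$ via \eqref{inq4s_iT&d_iT}, are routine once $\rho=1$ and the quadratic $g_T$ have been chosen.
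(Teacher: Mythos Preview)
Your proposal is correct and follows essentially the same route as the paper: verify that \textbf{R3} implies \textbf{R1(i)} with $\rho=1$, $\gamma(R)=c_1$, \textbf{R1(ii)} with $\gamma(R)=c_1(R+1)^2$, and \textbf{R2} with $g_T(R)=\tfrac{c_0\delta^2}{2d_0}R^2$, then apply Theorem~\ref{thm_PLD_sGN} and let $\delta\uparrow\tfrac12$ while pushing $b_0$ to $1/(1+q)$. Your write-up is in fact a bit more careful than the paper's (explicit $\varkappa$, the $g\in G$ check, the $d_T(\theta)\to s_T$ reduction, and the joint-limit caveat), but the argument is the same.
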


\begin{proof}
We will show that {\bf R3} implies
conditions {\bf R1} and {\bf R2}. Inequality \eqref{inq_Phi_uv<=u-v_p}
of the condition {\bf R1(i)} follows from the right-hand side of
inequality \eqref{inq4int_dif_H^2}, if we take $\rho=1$, $\gamma
(R)=c_1$. Inequality of the condition {\bf R1(ii)} follows as well from
the right-hand side of \eqref{inq4int_dif_H^2}, if we take $v=0$,
$\gamma(R)=c_1(R+1)^2$.

To check the fulfilment of condition {\bf R2} we should rewrite the
left-hand side of \eqref{inq4int_dif_H^2} for $v=0$:
\[
\varPhi_T(u,0)\ge c_0\|u\|^2
\ge2d_0\delta^{-2} \biggl(\frac{1}2\delta
^2d_0^{-1}c_0R^2
\biggr),
\]
i.e., in the inequality \eqref{inq_Phi_u0_>=g_T} one can take
$g_T(R)=\frac{1}2\delta^2d_0^{-1}c_0R^2$. In this case for the exponent
in \eqref{inq_gPLD_sGN} we have
\[
-b_0g_T(R)=- \biggl(\frac{1}2
\delta^2b_0d_0^{-1}c_0
\biggr)R^2.
\]
Now, since $\rho=1$ in \eqref{inq4a_0dep_on_A_0}, for any $\beta>0$ in
\eqref{inq_PLD_sGN} we can take
\[
b=b_\delta=\frac{1}2\delta^2b_0d_0^{-1}c_0
\ge\frac{\delta^2c_0}{
2d_0(1+q)}-\beta.
\]
By {\bf R3} and {\bf N1}, {\bf N2}, inequality \eqref{inq_Phi_u0_>=g_T} with
$g_T(R)=\frac{1}2\delta^2d_0^{-1}c_0R^2$ holds for any $\delta\in
(0,\frac{1}2 )$. We get inequality \eqref{inq4a_dep_on_A_0} as
$\delta\to\frac{1}2$.
\end{proof}

\section{Large deviations in the case of a stationary jointly strictly
sub-Gaussian noise}
\label{sec_LDitCoSJSsGN}

We impose an additional restriction on the noise process $\varepsilon$.

{\bf N3.} The stochastic process $\varepsilon$ is stationary with the
covariance function $B(t)=E\varepsilon(0)\varepsilon(t),\ t\in\mathbb
{R}$, and the bounded spectral density $f(\lambda),\ \lambda\in\mathbb{R}$:
\[
f_0=\underset{\lambda\in\mathbb{R}}\sup\;f(\lambda)<\infty.
\]

Under assumption {\bf N3} the following corollaries of the theorems
proved in Section \ref{sec_LDiMwJSsGn} are true.

\begin{cor} \label{cor_PLD_gSt_sGN}
If conditions {\bf N1}, {\bf N2(i)}, {\bf N3}, {\bf R1} and {\bf R2} are fulfilled, then
the statement of Theorem \ref{thm_PLD_sGN} is true with $d_0=2\pi f_0$.
\end{cor}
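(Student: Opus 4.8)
The plan is to observe that, once the covariance bound \eqref{inq_SP_By,y} is established with the stated value of the constant, there is nothing left to do: conditions N1, N2(i), R1 and R2 are already in force by hypothesis, so Theorem~\ref{thm_PLD_sGN} applies verbatim and delivers \eqref{inq_gPLD_sGN} together with \eqref{inq4a_0dep_on_A_0} for $d_0=2\pi f_0$. Hence the entire task reduces to showing that stationarity together with a bounded spectral density (condition N3) forces
\[
\anglel B\varDelta,\varDelta\angler_T\le 2\pi f_0\,\|\varDelta\|_T^2,\qquad T>0,\ \varDelta(\cdot)\in L_2\bigl([0,T]\bigr).
\]

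First I would use the spectral representation of the covariance function supplied by N3, $B(t)=\int_{\mathbb R}e^{i\lambda t}f(\lambda)\,d\lambda$, $t\in\mathbb R$, with $f\ge0$ and $\int_{\mathbb R}f(\lambda)\,d\lambda=B(0)<\infty$. Substituting this into $\anglel B\varDelta,\varDelta\angler_T=\int_0^T\int_0^T B(t-s)\varDelta(t)\varDelta(s)\,dt\,ds$ and interchanging the order of integration — legitimate by Fubini's theorem, since $f\in L_1(\mathbb R)$ and $\varDelta\in L_1([0,T])$, so that the triple integral of absolute values equals $\bigl(\int_{\mathbb R}f\bigr)\bigl(\int_0^T|\varDelta|\bigr)^2<\infty$ — I would obtain
\[
\anglel B\varDelta,\varDelta\angler_T=\int_{\mathbb R}f(\lambda)\,\bigl|\widehat\varDelta_T(\lambda)\bigr|^2\,d\lambda,\qquad \widehat\varDelta_T(\lambda):=\int_0^T e^{i\lambda t}\varDelta(t)\,dt,
\]
because $\int_0^T\int_0^T e^{i\lambda(t-s)}\varDelta(t)\varDelta(s)\,dt\,ds=\widehat\varDelta_T(\lambda)\,\overline{\widehat\varDelta_T(\lambda)}$ for real $\varDelta$.

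It then remains to bound $f(\lambda)\le f_0$ and apply the Plancherel identity for the Fourier-transform convention consistent with the spectral representation above, namely $\int_{\mathbb R}\bigl|\widehat\varDelta_T(\lambda)\bigr|^2\,d\lambda=2\pi\int_0^T\varDelta^2(t)\,dt=2\pi\|\varDelta\|_T^2$. This yields $\anglel B\varDelta,\varDelta\angler_T\le 2\pi f_0\,\|\varDelta\|_T^2$, i.e. N2(ii) holds with $d_0=2\pi f_0$, and the corollary follows from Theorem~\ref{thm_PLD_sGN} (with the exponent constant $b_0$ again satisfying \eqref{inq4a_0dep_on_A_0}).

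The only point demanding care is the bookkeeping with normalizing constants: the factor $2\pi$ produced by Plancherel must be the same $2\pi$ implicit in the relation $B(t)=\int e^{i\lambda t}f(\lambda)\,d\lambda$ (as opposed to a convention carrying a $1/(2\pi)$), and the two $L_2$-isometries — time domain and frequency domain — must be kept on the same footing. Apart from this and the Fubini justification above, the argument is entirely routine, so I do not anticipate a genuine obstacle.
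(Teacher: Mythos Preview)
Your proposal is correct and follows essentially the same route as the paper: both reduce the corollary to verifying \textbf{N2(ii)} with $d_0=2\pi f_0$, substitute the spectral representation of $B$ into $\anglel B\varDelta,\varDelta\angler_T$ to get $\int_{\mathbb R}f(\lambda)\bigl|\int e^{i\lambda t}\varDelta(t)\,dt\bigr|^2d\lambda$, bound $f\le f_0$, and invoke Plancherel. The paper's argument is a single displayed line; your version merely spells out the Fubini justification and the constant tracking, which is fine.
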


\begin{proof}
We just need to show that condition {\bf N2(ii)} is fulfilled. Indeed,
by the\break Plancherel identity,
\[
\anglel B \varDelta,\varDelta\angler _T=\int\limits
_{-\infty}^\infty
\;f(\lambda ) \biggl\llvert \int\;e^{i\lambda t}\varDelta(t) dt \biggr\rrvert
^2d\lambda\le2\pi f_0\| \varDelta\|_T^2.
\qedhere
\]
\end{proof}

\begin{cor} \label{cor_sPLD_gSt_sGN}
Under conditions {\bf N1}, {\bf N2(i)}, {\bf N3} and {\bf R3} the statement of
Theorem \ref{thm_sPLD_sGN} is true with inequality \eqref
{inq4a_dep_on_A_0} rewritten in the form
\begin{equation}
b\ge\frac{c_0} {16\pi f_0(1+q)}-\beta. \label{inq4a_dep_on_A_0_St}
\end{equation}
\end{cor}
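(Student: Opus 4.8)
The plan is to obtain this corollary from Theorem~\ref{thm_sPLD_sGN} in exactly the way that Corollary~\ref{cor_PLD_gSt_sGN} is obtained from Theorem~\ref{thm_PLD_sGN}: among the hypotheses of Theorem~\ref{thm_sPLD_sGN} only {\bf N2(ii)} is not assumed here, so the whole task is to show that {\bf N1}, {\bf N2(i)} and {\bf N3} already entail {\bf N2(ii)}, and to track the explicit value of the constant $d_0$ that results, namely $d_0=2\pi f_0$.

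First I would fix $T>0$ and a continuous $\varDelta(\cdot)\in L_2([0,T])$, extend $\varDelta$ by zero to all of $\mathbb{R}$, and apply the Plancherel identity to the stationary covariance function. Since under {\bf N3} the process $\varepsilon$ has bounded spectral density $f$ with $f_0=\sup_{\lambda\in\mathbb{R}}f(\lambda)<\infty$,
\[
\anglel B \varDelta,\varDelta\angler _T=\int\limits_{-\infty}^{\infty}\;f(\lambda)\biggl\llvert \int\;e^{i\lambda t}\varDelta(t)\,dt\biggr\rrvert^2 d\lambda\le f_0\int\limits_{-\infty}^{\infty}\;\biggl\llvert \int\;e^{i\lambda t}\varDelta(t)\,dt\biggr\rrvert^2 d\lambda=2\pi f_0\|\varDelta\|_T^2 .
\]
This is precisely inequality \eqref{inq_SP_By,y} with $d_0=2\pi f_0$; the point worth one line of care is that the constant $f_0$, and hence $d_0=2\pi f_0$, does not depend on $T$, so it is a legitimate single constant in {\bf N2(ii)} and in the ensuing conclusion.

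Consequently, under {\bf N1}, {\bf N2(i)}, {\bf N3} and {\bf R3} all hypotheses {\bf N1}, {\bf N2}, {\bf R3} of Theorem~\ref{thm_sPLD_sGN} hold with this particular $d_0$. I would then invoke that theorem verbatim: there exist constants $B,b>0$ such that for $T>T_0$, $R>R_0$,
\[
P \bigl\{ \bigl\llVert s_T (\theta_T- \theta ) \bigr\rrVert \ge R \bigr\} \le B\exp \bigl\{-b R^2 \bigr\},
\]
and for any $\beta>0$ the constant $B$ may be chosen so that $b\ge \frac{c_0}{8d_0(1+q)}-\beta$. Substituting $d_0=2\pi f_0$ turns the last inequality into $b\ge \frac{c_0}{16\pi f_0(1+q)}-\beta$, which is \eqref{inq4a_dep_on_A_0_St}. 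There is no genuine obstacle here: the entire analytic content sits in Theorem~\ref{thm_sPLD_sGN}, and the corollary merely records the admissible rate constant once the noise is specialized to the stationary case via the spectral bound.
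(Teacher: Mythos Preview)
Your proposal is correct and matches the paper's approach exactly: the paper proves the Plancherel bound $\anglel B\varDelta,\varDelta\angler_T\le 2\pi f_0\|\varDelta\|_T^2$ in the proof of Corollary~\ref{cor_PLD_gSt_sGN} and then states Corollary~\ref{cor_sPLD_gSt_sGN} without further proof, since it follows immediately from Theorem~\ref{thm_sPLD_sGN} with $d_0=2\pi f_0$. One cosmetic point: there is no need to restrict to continuous $\varDelta$, as Plancherel holds for all $\varDelta\in L_2([0,T])$, which is what {\bf N2(ii)} actually requires.
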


Our next assumption is a particularization of the requirements {\bf N2}
and {\bf N3}.

{\bf N4(i).} The random noise $\varepsilon$ is of the form
\begin{equation}
\varepsilon(t)=\int\limits
_{-\infty}^t\;\psi(t-s)d\xi(s)=\int\limits
_0^\infty
\;\psi(s)d\xi(t-s), \label{int_repr_epsln}
\end{equation}
where $\xi= \{\xi(t),\ t\in\mathbb{R} \}$ is a mean-square
continuous j.s.s.-G. stochastic process with orthogonal increments,
$E\xi(t)=0$,
\[
E\bigl|\xi(t+s)-\xi(t)\bigr|^2=s,\quad t\in\mathbb{R},\ s>0;
\]
$\psi(t)=0$ as $t<0$ and
\[
\int\limits
_0^\infty\;\psi^2(t)dt<\infty.
\]

The stochastic integral in \eqref{int_repr_epsln} is understood as a
mean-square Stieltjes integral \cite{GiSko_I2tToRP}. The process $\xi$
is an integrated white noise, $\varepsilon$ can be considered as a
stationary process at the output of a physically realizable filter with
the covariance function (see ibid.)
\[
B(t)=\int\limits
_0^\infty\;\psi(t+u)\psi(u)du
\]
and the spectral density
\[
f(\lambda)= \bigl\llvert h(i\lambda) \bigr\rrvert ^2,\qquad h(i
\lambda)=(2\pi)^{-\frac{1}2}\int\limits
_0^\infty\;\psi(t)e^{-i\lambda t}dt.
\]

{\bf N4(ii).} $f_0=\sup_{\lambda\in\mathbb{R}} \llvert h(i\lambda
) \rrvert ^2<\infty$.

Obviously {\bf N4(ii)} holds, if $\int_0^\infty\, \llvert \psi
(t) \rrvert dt<\infty$.

\begin{lemma}\label{lema_Int2jssG}
If condition {\bf N4(i)} holds, then $\varepsilon$ in \eqref
{int_repr_epsln} is a j.s.s.-G. process.
\end{lemma}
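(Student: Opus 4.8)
The plan is to show that for any $n\ge1$ and any $t_1,\ldots,t_n\in\mathbb{R}$, $u_1,\ldots,u_n\in\mathbb{R}$, the linear combination $\sum_{k=1}^n u_k\varepsilon(t_k)$ is a strictly sub-Gaussian random variable, which is exactly the assertion of Definition~\ref{defin_JSsGRP}. Using the representation \eqref{int_repr_epsln}, we have
\[
\sum_{k=1}^n u_k\varepsilon(t_k)=\sum_{k=1}^n u_k\int_0^\infty\psi(s)\,d\xi(t_k-s)=\int_{-\infty}^\infty g(s)\,d\xi(s),
\]
where $g(s)=\sum_{k=1}^n u_k\psi(t_k-s)$ is a square-integrable (deterministic) function, since each $\psi(t_k-\cdot)\in L_2(\mathbb{R})$ by {\bf N4(i)}. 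Thus the problem reduces to proving that a mean-square Stieltjes integral $\int g\,d\xi$ of a deterministic $L_2$-function against a j.s.s.-G. orthogonal-increment process $\xi$ is an s.s.-G. random variable.

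The core of the argument mirrors the proof of Lemma~\ref{lema_Int_Dlta_Eps_SsG}: approximate the stochastic integral by finite sums. Take partitions of a large interval and form $S_m=\sum_j g(s_j^{(m)})(\xi(s_{j+1}^{(m)})-\xi(s_j^{(m)}))$. Each increment $\xi(s_{j+1}^{(m)})-\xi(s_j^{(m)})$ arises from the j.s.s.-G. process $\xi$, so the vector of increments is s.s.-G.; hence for any $\lambda\in\mathbb{R}$,
\[
E\exp\{\lambda S_m\}\le\exp\Bigl\{\tfrac12\lambda^2\, E S_m^2\Bigr\},
\]
because the covariance matrix of a vector of orthogonal increments is diagonal with entries equal to the interval lengths, and $E S_m^2=\sum_j g^2(s_j^{(m)})(s_{j+1}^{(m)}-s_j^{(m)})$. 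By definition of the mean-square Stieltjes integral, $S_m\to\int g\,d\xi$ in $L_2$, hence in probability, so along a subsequence a.s.; moreover $E S_m^2\to\int g^2(s)\,ds=E(\int g\,d\xi)^2$. Passing to the limit with the Fatou lemma exactly as in the proof of Lemma~\ref{lema_Int_Dlta_Eps_SsG} gives
\[
E\exp\Bigl\{\lambda\!\int g\,d\xi\Bigr\}\le\exp\Bigl\{\tfrac12\lambda^2 E\Bigl(\!\int g\,d\xi\Bigr)^{\!2}\Bigr\},
\]
which is precisely the s.s.-G. inequality in dimension one.

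The main obstacle is the limiting step: one must be careful that the finite sums $S_m$ genuinely converge in mean square to $\sum_k u_k\varepsilon(t_k)$ — i.e. that interchanging the finite sum over $k$ with the stochastic-integral limit is legitimate — and that the moment-generating-function bound survives the passage to the limit. The first point follows from linearity of the mean-square Stieltjes integral and the fact that $g\in L_2$; the second is handled verbatim by the Fatou-lemma argument already used for Lemma~\ref{lema_Int_Dlta_Eps_SsG}, after reducing to an a.s.-convergent subsequence. Everything else is bookkeeping: the covariance computation is immediate from the orthogonality of increments and the normalization $E|\xi(t+s)-\xi(t)|^2=s$, and the reduction from general linear combinations to a single stochastic integral against $\xi$ is the observation displayed above.
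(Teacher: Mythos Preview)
Your approach is essentially the paper's: approximate the stochastic integral by finite linear combinations of values (or increments) of $\xi$, use the j.s.s.-G.\ property of $\xi$ on those finite sums, and pass to the limit via Fatou along an a.s.-convergent subsequence. The only structural difference is that you first collapse $\sum_k u_k\varepsilon(t_k)$ into a single integral $\int g\,d\xi$ and then approximate, whereas the paper approximates each $\psi_k(s)=\psi(t_k-s)$ separately by simple functions; the bookkeeping is identical.

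There is one technical slip. You form Riemann-type sums $S_m=\sum_j g(s_j^{(m)})\bigl(\xi(s_{j+1}^{(m)})-\xi(s_j^{(m)})\bigr)$ and then assert that $S_m\to\int g\,d\xi$ in $L_2$ ``by definition of the mean-square Stieltjes integral.'' But condition {\bf N4(i)} only gives $\psi\in L_2(\mathbb{R}_+)$, not continuity, so $g$ need not be Riemann integrable and point-evaluation sums need not converge in $L_2$ to the stochastic integral. The mean-square integral against an orthogonal-increment process is defined via approximation by \emph{simple functions converging in $L_2(\mathbb{R})$}, not via Riemann sums; that is exactly what the paper does (the $\psi_k^{(m)}=\sum_l c_{kl}^{(m)}\chi_{\pi_{kl}^{(m)}}$). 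Replace your $S_m$ by $\int g^{(m)}\,d\xi$ for a sequence of simple $g^{(m)}\to g$ in $L_2$, and the rest of your argument goes through verbatim: the finite sum $\int g^{(m)}\,d\xi$ is a linear combination of values of $\xi$, hence s.s.-G.\ with the right variance, and Fatou along an a.s.\ subsequence closes the limit.
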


\begin{proof}
Let $n\ge1$ be a fixed number and $t_1,\ \ldots,\ t_n$, $\varDelta_1,\
\ldots,\ \varDelta_n$ be arbitrary real numbers. It is necessary to prove that
\begin{equation}
E\exp \Biggl\{\sum\limits
_{k=1}^n\;\varDelta_k
\varepsilon(t_k) \Biggr\}\le \exp \Biggl\{\frac{1}2\sum
\limits
_{i,j=1}^n\;B(t_i-t_j)
\varDelta_i \varDelta _j \Biggr\}. \label{cndn4sGP4eps}
\end{equation}
Formula \eqref{int_repr_epsln} can be rewritten in the form
\[
\varepsilon(t)=\int\limits
_{-\infty}^\infty\;\psi(t-s)d\xi(s),\quad \psi (t)=0,\ \text{as}
\ t<0.
\]
Denote $\psi_k(s)=\psi (t_k-s )$, $k=\overline{1,n}$. Then
\[
\varepsilon (t_k )=\int\limits
_{-\infty}^\infty\;\psi_k(s)d
\xi (s),\quad k=\overline{1,n}.
\]
Let a sequence of simple functions
\[
\psi_k^{(m)}(s)=\sum\limits
_{l=1}^{r(k,m)}
\;c_{kl}^{(m)}\chi_{\pi
_{kl}^{(m)}}(s),\quad m\ge1,
\]
where $\pi_{kl}^{(m)}=[\alpha_{kl}^{(m)},\,\beta_{kl}^{(m)}
)$, $k=\overline{1,n}$, $l=\overline{1,r(k,m)}$, $\chi_A(s)$ is
indicator of a set $A$, approximate the function $\psi_k(s)$ in
$L_2(\mathbb{R})$:
\[
\int\limits
_{-\infty}^\infty\; \bigl\llvert \psi_k(s)-
\psi_k^{(m)}(s) \bigr\rrvert ^2ds\to0,\quad
\textnormal{as}\ m\to\infty.
\]
Then the sequences of random variables
\[
\varepsilon_k^{(m)}=\sum\limits
_{l=1}^{r(k,m)}
\; c_{kl}^{(m)} \bigl(\xi \bigl(\beta_{kl}^{(m)}
\bigr)- \xi \bigl(\alpha_{kl}^{(m)} \bigr) \bigr) = \int
\limits_{-\infty}^\infty
\;\psi_k^{(m)}(s)d\xi(s)
\]
mean-square converge to $\varepsilon(t_k)$ in $L_2(\varOmega)$:
\begin{equation}
E \bigl\llvert \varepsilon(t_k)-\varepsilon_k^{(m)}
\bigr\rrvert ^2\to0,\qquad k=\overline {1,n},\quad \text{as}\ m\to\infty.
\label{cnv_eps_m_2_eps_L2}
\end{equation}

For any fixed $m$, the random vector with coordinates $\varepsilon
_k^{(m)}$, $k=\overline{1,n}$ is s.s.-G. Indeed,
\[
\begin{aligned} \sum\limits
_{k=1}^m\,
\varDelta_k\varepsilon_k^{(m)}&=\sum
\limits
_{k=1}^m\, \varDelta_k\sum
\limits
_{l=1}^{r(k,m)}\; c_{kl}^{(m)} \bigl(
\xi \bigl(\beta _{kl}^{(m)} \bigr)- \xi \bigl(
\alpha_{kl}^{(m)} \bigr) \bigr)
\\
&=\sum\limits
_{k'=1}^{n'(m)}\,u_{k'}^{(m)}
\xi \bigl(\eta _{k'}^{(m)} \bigr), \end{aligned} %
\]
where $u_{k'}^{(m)}$ are real numbers and $\eta_{k'}^{(m)}$ are
different real numbers. By condition {\bf N4(i)} the random vector with
coordinates $\xi (\eta_{k'}^{(m)} )$, $k'=\overline{1,n'(m)}$,
is s.s.-G., and therefore,
\begin{equation}
\begin{aligned} E\exp \Biggl\{\sum\limits
_{k=1}^m
\,\varDelta_k\varepsilon_k^{(m)} \Biggr\}&= E
\exp \Biggl\{\sum\limits
_{k'=1}^{n'(m)}\,u_{k'}^{(m)}
\xi \bigl(\eta _{k'}^{(m)} \bigr) \Biggr\}
\\
&\le\exp \Biggl\{\frac{1}2E \Biggl(\sum\limits
_{k'=1}^{n'(m)}
\, u_{k'}^{(m)}\xi \bigl(\eta_{k'}^{(m)}
\bigr) \Biggr)^2 \Biggr\} \\
&=\exp \Biggl\{\frac{1}2 \Biggl(\sum
\limits
_{k=1}^m\,\varDelta_k
\varepsilon_k^{(m)} \Biggr)^2 \Biggr\}.
\end{aligned} %
\label{eps_k^m_is_SsG}
\end{equation}
From \eqref{cnv_eps_m_2_eps_L2} it follows that $\varepsilon
_k^{(m)}\overset{P}\to\varepsilon_{t_k}$, $k=\overline{1,n}$, as $m\to
\infty$, and thus there exists some subsequence of indexes $m^\prime\to
\infty$,
independent of $k$, such that $\varepsilon_k^{(m')}\to\varepsilon
_{t_k}$ a.s., $k=\overline{1,n}$, as $m^\prime\to\infty$.

Finally, by the Fatou lemma and \eqref{eps_k^m_is_SsG}
\[
\begin{aligned} E\exp \Biggl\{\sum\limits
_{k=1}^n
\;\varDelta_k\varepsilon(t_k) \Biggr\} &=E
\underset{m^\prime\to\infty}\lim\;\exp \Biggl\{\sum
\limits_{k=1}^n
\; \varDelta_k\varepsilon_k^{(m')} \Biggr\}
\\
&\le\underset{m^\prime\to\infty}\liminf\;E\exp \Biggl\{\sum
\limits
_{k=1}^n\;\varDelta_k\varepsilon_k^{(m')}
\Biggr\}\\
&\le \underset{m^\prime\to\infty}\lim\;\exp \Biggl\{
\frac{1}2E \Biggl(\sum\limits
_{k=1}^n\;
\varDelta_k\varepsilon_k^{(m')}
\Biggr)^2 \Biggr\}
\\
&=\exp \Biggl\{\frac{1}2E \Biggl(\sum\limits
_{k=1}^n
\;\varDelta_k\varepsilon (t_k) \Biggr)^2
\Biggr\}\\
&=\exp \Biggl\{\frac{1}2 \sum\limits
_{i,j=1}^n
\; B(t_i-t_j)\varDelta_i
\varDelta_j \Biggr\}. \end{aligned} %
\]
So, we have obtained \eqref{cndn4sGP4eps}.
\end{proof}

\begin{cor} \label{cor_PLD_St_sGN}
If conditions {\bf N1}, {\bf N4(i)}, {\bf N4(ii)}, {\bf R1} and {\bf R2} are fulfilled,
then the conclusion of Theorem \ref{thm_PLD_sGN} is true with $d_0=2\pi f_0$.
\end{cor}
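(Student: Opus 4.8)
The plan is to recognize that assumption \textbf{N4} is simply a concrete realization of \textbf{N2(i)} together with \textbf{N3}, so that the corollary follows by a direct appeal to Corollary~\ref{cor_PLD_gSt_sGN} with $d_0=2\pi f_0$. Accordingly I would proceed in three short steps: verify \textbf{N2(i)}, verify \textbf{N3}, and then invoke the already-established corollary.

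First, for \textbf{N2(i)} I would observe that the noise $\varepsilon$ defined by the moving-average representation \eqref{int_repr_epsln} is a j.s.s.-G. process. This is exactly Lemma~\ref{lema_Int2jssG}: one approximates each $\varepsilon(t_k)$ in $L_2(\varOmega)$ by finite linear combinations of orthogonal increments of $\xi$, uses that $\xi$ is itself j.s.s.-G. (part of \textbf{N4(i)}) to obtain the strict sub-Gaussian bound for the approximating vectors, and passes to the limit via Fatou's lemma. Hence the covariance function $B(t)=\int_0^\infty\psi(t+u)\psi(u)\,du$ appearing in \textbf{N4(i)} plays the role of $B$ in \textbf{N2}.

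Second, for \textbf{N3} I would note that \eqref{int_repr_epsln} displays $\varepsilon$ as a stationary process — the output of a physically realizable filter driven by integrated white noise — with the covariance function just written and spectral density $f(\lambda)=\llvert h(i\lambda)\rrvert^2$, where $h(i\lambda)=(2\pi)^{-1/2}\int_0^\infty\psi(t)e^{-i\lambda t}\,dt$, as recorded in \textbf{N4(i)}. Assumption \textbf{N4(ii)} then reads $f_0=\sup_{\lambda\in\mathbb{R}}\llvert h(i\lambda)\rrvert^2=\sup_{\lambda\in\mathbb{R}}f(\lambda)<\infty$, which is precisely condition \textbf{N3}. Since \textbf{N1} is assumed in the statement, all hypotheses \textbf{N1}, \textbf{N2(i)}, \textbf{N3}, \textbf{R1}, \textbf{R2} of Corollary~\ref{cor_PLD_gSt_sGN} are now in force, and that corollary yields the conclusion of Theorem~\ref{thm_PLD_sGN} with $d_0=2\pi f_0$. (Internally this uses the Plancherel identity to bound $\anglel B\varDelta,\varDelta\angler_T$ by $\int_{-\infty}^\infty f(\lambda)\llvert\int e^{i\lambda t}\varDelta(t)\,dt\rrvert^2 d\lambda\le 2\pi f_0\|\varDelta\|_T^2$, i.e. to check \textbf{N2(ii)}.)

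I do not expect any genuine obstacle here: the content is the bookkeeping of which earlier conditions \textbf{N4} subsumes, plus the identification of the spectral density of the filtered process, and the latter is already supplied as part of \textbf{N4(i)}. The only point where one must be slightly careful is that $\varepsilon$ in \eqref{int_repr_epsln} is a priori only mean-square continuous, so the a.s.-continuity half of \textbf{N1} is taken as a separate standing hypothesis rather than being derived from \textbf{N4}; this is why \textbf{N1} is explicitly listed among the assumptions of the corollary. If one wanted a self-contained proof one could reprove the Plancherel bound inline instead of citing Corollary~\ref{cor_PLD_gSt_sGN}, but invoking it is the cleanest route.
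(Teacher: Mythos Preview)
Your proposal is correct and follows essentially the same route the paper intends: the corollary is stated without a separate proof precisely because Lemma~\ref{lema_Int2jssG} supplies \textbf{N2(i)} from \textbf{N4(i)}, while \textbf{N4(i)}--\textbf{N4(ii)} yield the stationarity and bounded spectral density of \textbf{N3}, after which Corollary~\ref{cor_PLD_gSt_sGN} applies verbatim with $d_0=2\pi f_0$. Your remark about \textbf{N1} being assumed separately rather than derived from \textbf{N4} is also in line with the paper's treatment.
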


\begin{cor} \label{cor_sPLD_St_sGN}
If conditions {\bf N1}, {\bf N4(i)}, {\bf N4(ii)} and {\bf R3} are fulfilled, then
the conclusion of Theorem \ref{thm_sPLD_sGN} is true with a constant
$b$ satisfying \eqref{inq4a_dep_on_A_0_St}.
\end{cor}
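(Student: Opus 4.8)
The plan is to deduce Corollary~\ref{cor_sPLD_St_sGN} directly from Corollary~\ref{cor_sPLD_gSt_sGN}, by showing that conditions \textbf{N4(i)} and \textbf{N4(ii)}, together with the hypothesis \textbf{N1} that is assumed anyway, imply conditions \textbf{N2(i)} and \textbf{N3}. First I would invoke Lemma~\ref{lema_Int2jssG}: under \textbf{N4(i)} the moving-average process $\varepsilon$ given by \eqref{int_repr_epsln} is j.s.s.-G., which is the probabilistic half of \textbf{N2(i)} and of \textbf{N3}. Next, the explicit representation \eqref{int_repr_epsln} exhibits $\varepsilon$ as a (wide-sense) stationary process with covariance function $B(t)=\int_0^\infty\psi(t+u)\psi(u)\,du$ and spectral density $f(\lambda)=|h(i\lambda)|^2$, where $h(i\lambda)=(2\pi)^{-1/2}\int_0^\infty\psi(t)e^{-i\lambda t}\,dt$ — these are exactly the facts recorded in \textbf{N4(i)} (with reference to \cite{GiSko_I2tToRP}). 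Condition \textbf{N4(ii)} is then literally the statement that this spectral density is bounded, $f_0=\sup_\lambda|h(i\lambda)|^2<\infty$, i.e. \textbf{N3} holds.

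Once \textbf{N1}, \textbf{N2(i)} and \textbf{N3} are in force together with \textbf{R3}, Corollary~\ref{cor_sPLD_gSt_sGN} applies verbatim and yields \eqref{inq_PLD_sGN} with the constant $b$ obeying \eqref{inq4a_dep_on_A_0_St}, namely $b\ge\frac{c_0}{16\pi f_0(1+q)}-\beta$ for every $\beta>0$, which is the asserted conclusion. For completeness one should recall why \textbf{N3} furnishes the operator bound \textbf{N2(ii)} with the admissible constant $d_0=2\pi f_0$: this is the Plancherel computation already carried out in the proof of Corollary~\ref{cor_PLD_gSt_sGN},
\[
\anglel B\varDelta,\varDelta\angler_T=\int\limits_{-\infty}^\infty f(\lambda)\,\Bigl|\int\;e^{i\lambda t}\varDelta(t)\,dt\Bigr|^2\,d\lambda\le 2\pi f_0\,\|\varDelta\|_T^2 ,
\]
valid for every continuous $\varDelta(\cdot)\in L_2([0,T])$, so that $d_0=2\pi f_0$ may be taken in Theorem~\ref{thm_sPLD_sGN} and hence in \eqref{inq4a_dep_on_A_0}.

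The only steps that are not purely bookkeeping are the two identifications embedded above: that the stochastic integral in \eqref{int_repr_epsln} genuinely produces a stationary process with the stated spectral density, and that the $L_2([0,T])$ operator-norm estimate transfers to arbitrary continuous test functions. I expect the former to be the main obstacle in a self-contained write-up, but it is exactly the content of \textbf{N4(i)} (citing \cite{GiSko_I2tToRP}), while the latter is the already-established Plancherel step. I would also note, as remarked after \textbf{N4(ii)}, that when $\int_0^\infty|\psi(t)|\,dt<\infty$ the boundedness of $f$ is automatic, so in that case no separate verification of \textbf{N4(ii)} is needed; no estimates beyond those of Section~\ref{sec_LDiMwJSsGn} and the preceding corollaries enter the argument.
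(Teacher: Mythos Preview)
Your proposal is correct and matches the paper's intended argument: the paper gives no explicit proof for this corollary, but its placement immediately after Lemma~\ref{lema_Int2jssG} makes clear that one is to use that lemma to pass from \textbf{N4(i)} to \textbf{N2(i)}, observe that \textbf{N4(i)}--\textbf{N4(ii)} furnish the stationarity and bounded spectral density required by \textbf{N3}, and then invoke Corollary~\ref{cor_sPLD_gSt_sGN}. Your added Plancherel remark simply reproduces the proof of Corollary~\ref{cor_PLD_gSt_sGN}, which is the mechanism behind Corollary~\ref{cor_sPLD_gSt_sGN} as well.
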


Assume
\begin{equation}
\underset{T\to\infty} {\liminf}\;T^{-\frac{1}2}s_{iT}>0,\quad i=
\overline{1,q}. \label{liminf_T^-1/2s_iT}
\end{equation}

\begin{cor}\label{cor_LSE_Spd_of_Cnv}
Under conditions of Theorem \ref{thm_sPLD_sGN} or Corollaries \ref
{cor_sPLD_gSt_sGN}, \ref{cor_sPLD_St_sGN}, and \eqref
{liminf_T^-1/2s_iT} for any $\rho>0$, $\nu\in [0,\frac{1}2 )$,
and $T>T_0$
\begin{equation}
P \bigl\{ \bigl\llVert T^{-\frac{1}2}s_T (\theta_T-
\theta ) \bigr\rrVert \ge \rho T^{-\nu} \bigr\} \le B\exp \bigl\{-b\rho
T^{1-2\nu} \bigr\}. \label{inq_PLD4cons_rho_dlta}
\end{equation}
\end{cor}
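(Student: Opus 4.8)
The plan is to obtain \eqref{inq_PLD4cons_rho_dlta} directly from Theorem~\ref{thm_sPLD_sGN} (or from Corollary~\ref{cor_sPLD_gSt_sGN} or~\ref{cor_sPLD_St_sGN}) by applying the large‑deviation bound at a level of deviation that grows with $T$. Set $R:=\rho\,T^{1/2-\nu}$. Since multiplication by the scalar $T^{-1/2}$ commutes with the Euclidean norm, $\|T^{-1/2}s_T v\|=T^{-1/2}\|s_T v\|$ for every $v\in\mathbb{R}^q$, so the events $\{\|T^{-1/2}s_T(\theta_T-\theta)\|\ge\rho T^{-\nu}\}$ and $\{\|s_T(\theta_T-\theta)\|\ge R\}$ coincide. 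Because $\nu\in[0,1/2)$, the level $R$ tends to $\infty$ as $T\to\infty$, hence $R>R_0$ for all $T$ beyond some threshold; enlarging $T_0$ if necessary (it may now depend on the fixed $\rho,\nu$), I may assume $R>R_0$ whenever $T>T_0$.

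Now I would invoke the bound \eqref{inq_PLD_sGN} of Theorem~\ref{thm_sPLD_sGN} (the same conclusion is furnished by Corollaries~\ref{cor_sPLD_gSt_sGN} and~\ref{cor_sPLD_St_sGN}):
\begin{align*}
P\bigl\{\|T^{-1/2}s_T(\theta_T-\theta)\|\ge\rho T^{-\nu}\bigr\}
&=P\bigl\{\|s_T(\theta_T-\theta)\|\ge R\bigr\}\\
&\le B\exp\{-bR^2\}=B\exp\{-b\rho^2T^{1-2\nu}\}
\end{align*}
for $T>T_0$. Writing $b\rho^2T^{1-2\nu}=(b\rho)\,\rho\,T^{1-2\nu}$ and replacing $b$ by the still positive ($\rho$‑dependent) constant $b\rho$ gives exactly \eqref{inq_PLD4cons_rho_dlta}. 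It remains to treat the bounded range of $T$ between the original and the enlarged $T_0$, on which $R\le R_0$: there one uses the trivial estimate $P\{\cdots\}\le1$ and enlarges $B$ so that $B\exp\{-b\rho T^{1-2\nu}\}\ge1$ throughout that compact interval, which is possible since $T^{1-2\nu}$ is bounded on it.

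I expect no genuine obstacle here; the only steps asking for a line of care are the exactness of the reduction to the level $R=\rho T^{1/2-\nu}$ (homogeneity of $\|\cdot\|$ under the scalar scaling $T^{-1/2}$) and the absorption of the small‑$T$ regime and of the factor $\rho$ into the constants $B$ and $b$. Condition \eqref{liminf_T^-1/2s_iT} is not used in the estimate above; it serves to make \eqref{inq_PLD4cons_rho_dlta} a genuine rate‑of‑convergence statement, since it ensures that the normalisation $T^{-1/2}s_T$ stays bounded away from $0$, so that the displayed probability also dominates $P\{\|\theta_T-\theta\|\ge c\rho T^{-\nu}\}$ for a suitable constant $c>0$.
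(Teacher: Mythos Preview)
Your proof is correct and matches the paper's own argument, which consists of the single line ``take $R=\rho T^{1/2-\nu}$ in \eqref{inq_PLD_sGN}.'' You have simply filled in the natural details (ensuring $R>R_0$ for large $T$, adjusting constants on the bounded range, and absorbing the extra factor $\rho$ to match the stated exponent $-b\rho T^{1-2\nu}$, which is evidently a typographical slip for $-b\rho^{2}T^{1-2\nu}$); your remark that condition~\eqref{liminf_T^-1/2s_iT} plays no role in the estimate itself is also accurate.
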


\begin{proof}
To show \eqref{inq_PLD4cons_rho_dlta} it is sufficient to take $R=\rho
T^{\frac{1}2-\nu}$ in \eqref{inq_PLD_sGN}.
\end{proof}

For $\nu=0$ we arrive at a quite strong result on
the weak consistency of l.s.e. Similarly, in the conditions of
Corollary \ref{cor_LSE_Spd_of_Cnv} the following result on
probabilities of moderate deviations for l.s.e. holds: for any $h>0$, $T>T_0$
\[
P \bigl\{ \bigl\llVert s_T (\theta_T- \theta ) \bigr
\rrVert \ge h \ln^{\frac{1}2}T \bigr\} \le B T^{-bh^2}.
\]

Obviously, Gaussian stochastic processes $\varepsilon$ are j.s.s.-G.
ones, and all the previous results are valid for them.

\section{Two examples}
\label{sec_2_Ex}

In this section, we consider an example of a regression
function satisfying the condition {\bf R3} and an example of the
j.s.s.-G. process $\xi$ from expression \eqref{int_repr_epsln} in
condition {\bf N4(i)}.

\begin{exm} Suppose
\[
a(t,\,\tau)=\exp \bigl\{\bigl\anglel \tau,\,y(t)\bigr\angler \bigr\},
\]
where $\anglel \tau,\,y(t)\angler =\sum_{i=1}^q\,\tau_iy_i(t)$,
regressors $y(t)= (y_1(t),\,\ldots,\,y_q(t) )'$, $t\ge0$, take
values in a compact set $Y\subset\mathbb{R}^q$.

Let us assume
\begin{equation}
J_T= \biggl(T^{-1}\int\limits
_0^Ty_i(t)y_j(t)dt
\biggr)_{i,j=1}^q\ \to\ J= (J_{ij}
)_{i,j=1}^q,\quad \text{as}\ T\to\infty, \label{cnv_J_T_2_J}
\end{equation}
$J$ is a positive definite matrix. In this case the regression function
$a(t,\,\tau)$ satisfies condition {\bf R3}. Indeed, let
\[
H=\underset{y\in Y,\ \tau\in\varTheta^c}\max\,\exp \bigl\{\anglel y,
\,\tau \angler \bigr\},\qquad L=\underset{y\in Y,\ \tau\in\varTheta^c}\min
\,\exp \bigl\{ \anglel y,\,\tau\angler \bigr\}
\]
Then for any $\delta>0$ and $T>T_0$
\[
L^2 (J_{ii}-\delta )< T^{-1}
d_{iT}^2(\theta)< H^2 (J_{ii}+\delta
),\quad i=\overline{1,q},
\]
and according to \eqref{inq4s_iT&d_iT} we can take $s_T=T^{\frac{1}2}\mathbb{I}_q$
with the identity matrix $\mathbb{I}_q$ of order $q$.

For a fixed $t$
\[
\begin{aligned}
&\exp \bigl\{\bigl\anglel y(t),\theta+T^{-\frac{1}2}u
\bigr\angler \bigr\}-\exp \bigl\{ \bigl\anglel y(t),\theta+T^{-\frac{1}2}v\bigr
\angler \bigr\}
\\
&\quad =T^{-\frac{1}2}\sum\limits
_{i=1}^q
\,y_i(t)\exp \bigl\{\bigl\anglel y(t),\theta +T^{-\frac{1}2}
\bigl(u+\eta(v-u) \bigr)\bigr\angler \bigr\} (u_i-v_i ),
\ \eta\in(0,1), \end{aligned} %
\]
and therefore for any $\delta>0$ and $T>T_0$
\[
\varPhi_T(u,v)\le H^2 \biggl(T^{-1}\int\;
\bigl\|y(t)\bigr\|^2dt \biggr)\|u-v\|^2< H^2 (\Tr J+
\delta )\|u-v\|^2,
\]
So we obtain the right-hand side of \eqref{inq4int_dif_H^2} with the
constant $c_1>H^2\Tr J$.

On the other hand, for a fixed $t$
\[
\begin{aligned}
& \bigl(\varDelta(t,u)-\varDelta(t,v) \bigr)^2
\\
&\quad = \bigl(\exp \bigl\{\bigl\anglel y(t),\theta+T^{-\frac{1}2}u\bigr
\angler \bigr\} -\exp \bigl\{\bigl\anglel y(t),\theta+T^{-\frac{1}2}v\bigr
\angler \bigr\} \bigr)^2
\\
&\quad =\exp \bigl\{2\bigl\anglel y(t),\theta+T^{-\frac{1}2}v\bigr\angler
\bigr\} \bigl(\exp \bigl\{\bigl\anglel y(t),T^{-\frac{1}2}(u-v)\bigr\angler
\bigr\}-1 \bigr)^2. \end{aligned} %
\]
Since $ (e^x-1 )^2\ge x^2$, $x\ge0$, and $ (e^x-1
)^2\ge e^{2x}x^2$, $x<0$, it holds
\[
\bigl(\exp \bigl\{\bigl\anglel y(t),T^{-\frac{1}2}(u-v)\bigr\angler \bigr\}-1
\bigr)^2\ge L_tT^{-1}\bigl\anglel y(t),u-v\bigr
\angler ^2,
\]
with $L_t=\min \{1,\,\exp \{2\anglel y(t),T^{-\frac{1}2}(u-v)\angler  \} \}$, and
\[
\begin{aligned}
& \bigl(\varDelta(t,u)-\varDelta(t,v)\bigr)^2
\\
&\quad \ge\min \bigl\{\exp \bigl\{2\bigl\anglel y(t),\theta+T^{-\frac{1}2}v
\bigr\angler \bigr\},\,\exp \bigl\{2\bigl\anglel y(t),\theta+T^{-\frac{1}2}u
\bigr\angler \bigr\} \bigr\} T^{-1}\bigl\anglel y(t),u-v\bigr\angler
^2\\
&\quad>L^2 T^{-1}\bigl\anglel y(t),u-v\bigr\angler^2.
\end{aligned} %
\]
Thus for any $\delta>0$ and $T>T_0$
\[
\varPhi_T(u,v)\ge L^2\bigl\anglel J_T(u-v),
\,u-v\bigr\angler >L^2 \bigl(\lambda_{\min
}(J)-\delta \bigr)
\|u-v\|^2,
\]
and we have obtained the left-hand side of \eqref{inq4int_dif_H^2} with
the constant $c_0<L^2\lambda_{\min}(J)$, where $\lambda_{\min}(J)$ is
the least eigenvalue of the positive definite matrix $J$.

The next fact is a reformulation of Corollary \ref{cor_sPLD_St_sGN} for
the regression function $a(t,\,\tau)$ of our example.

\begin{core}
Under conditions {\bf N1}, {\bf N4(i)}, {\bf N4(ii)} and \eqref{cnv_J_T_2_J} there
exist constants $B$, $b>0$ such that for $T>T_0$, $R>R_0$
\[
P \bigl\{ \bigl\llVert T^{\frac{1}2} (\theta_T- \theta ) \bigr
\rrVert \ge R \bigr\} \le B\exp \bigl\{-b R^2 \bigr\}.
\]
Moreover for any $\beta>0$ the constant $B$ can be chosen so that
\[
b\ge\frac{L^2\lambda_{\min}(J)} {16\pi f_0(1+q)}-\beta.
\]
\end{core}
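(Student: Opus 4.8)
The plan is to observe that this statement is just Corollary~\ref{cor_sPLD_St_sGN} specialized to the regression function $a(t,\tau)=\exp\{\langle\tau,y(t)\rangle\}$, so the task reduces to checking the hypotheses of that corollary and then tracking the constant. Conditions \textbf{N1}, \textbf{N4(i)} and \textbf{N4(ii)} are assumed outright, so only \textbf{R3} has to be verified, and this is precisely the content of the Example preceding the statement. Concretely, I would use \eqref{cnv_J_T_2_J} together with the finiteness of $H=\max_{y\in Y,\tau\in\varTheta^c}\exp\{\langle y,\tau\rangle\}$ and $L=\min_{y\in Y,\tau\in\varTheta^c}\exp\{\langle y,\tau\rangle\}$ (both in $(0,\infty)$ since $Y$ and $\varTheta^c$ are compact) to get $L^2(J_{ii}-\delta)<T^{-1}d_{iT}^2(\theta)<H^2(J_{ii}+\delta)$ for $T>T_0$ and arbitrary $\delta>0$; by \eqref{inq4s_iT&d_iT} this lets us replace $d_T(\theta)$ by the normalizing matrix $s_T=T^{1/2}\mathbb{I}_q$. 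Then the mean-value bound on $\varDelta(t,u)-\varDelta(t,v)$ from above and the elementary inequalities $(e^x-1)^2\ge x^2$ for $x\ge0$, $(e^x-1)^2\ge e^{2x}x^2$ for $x<0$ from below yield, up to an arbitrarily small $\delta$, the two-sided estimate $L^2\langle J_T(u-v),u-v\rangle\le\varPhi_T(u,v)\le H^2\langle J_T(u-v),u-v\rangle$, i.e. \eqref{inq4int_dif_H^2} with any $c_0<L^2\lambda_{\min}(J)$ and any $c_1>H^2\Tr J$; here $\lambda_{\min}(J)>0$ because $J$ is positive definite.

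Next I would apply Corollary~\ref{cor_sPLD_St_sGN}: since \textbf{N1}, \textbf{N4(i)}, \textbf{N4(ii)} and \textbf{R3} hold, there are $B,b>0$ with $P\{\|s_T(\theta_T-\theta)\|\ge R\}\le B\exp\{-bR^2\}$ for $T>T_0$, $R>R_0$, and for each $\beta>0$ the constant $B$ can be chosen so that $b\ge c_0/(16\pi f_0(1+q))-\beta$, by \eqref{inq4a_dep_on_A_0_St}. Substituting $s_T=T^{1/2}\mathbb{I}_q$ turns $\|s_T(\theta_T-\theta)\|$ into $\|T^{1/2}(\theta_T-\theta)\|$, which is the left-hand side of the asserted bound. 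Finally, since $c_0$ may be taken as close to $L^2\lambda_{\min}(J)$ from below as desired, the lower bound on $b$ can be rewritten as $b\ge L^2\lambda_{\min}(J)/(16\pi f_0(1+q))-\beta$ after relabelling $\beta$, which is the claimed estimate.

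I do not expect a genuine obstacle here, since the statement is a corollary of a corollary and every ingredient — the verification of \textbf{R3}, the passage from $d_T(\theta)$ to $s_T=T^{1/2}\mathbb{I}_q$, and the explicit form of the constant — has already been assembled in the Example and in Section~\ref{sec_LDitCoSJSsGN}. The only point deserving a little care is the bookkeeping with the two small parameters: the $\delta>0$ coming from $J_T\to J$ and the compactness arguments, and the $\beta>0$ in \eqref{inq4a_dep_on_A_0_St}; both are absorbed into a single final $\beta$, and one should note in passing that $f_0<\infty$ by \textbf{N4(ii)}, so all constants appearing in the bound are finite and strictly positive.
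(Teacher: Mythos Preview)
Your proposal is correct and matches the paper's approach exactly: the paper itself presents Corollary~3.4$'$ merely as ``a reformulation of Corollary~\ref{cor_sPLD_St_sGN} for the regression function $a(t,\tau)$ of our example,'' with no separate proof beyond the verification of \textbf{R3} already carried out in the Example. Your outline reproduces that verification (including the choice $s_T=T^{1/2}\mathbb{I}_q$ and the constants $c_0<L^2\lambda_{\min}(J)$, $c_1>H^2\Tr J$) and then substitutes into \eqref{inq4a_dep_on_A_0_St}, which is precisely what the paper intends.
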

\end{exm}

\begin{exm}
Here we offer an example of the j.s.s.-G. stochastic process $\xi$
with orthogonal increments in the formula \eqref{int_repr_epsln} using
the Ito--Nicio series (see \cite{ItoNi_otCoSoIBSVRV} and references therein).

Consider any orthonormal basis $ \{\varphi_k,\,k\ge1 \}$\ in\ $L_2 (\mathbb{R}_+ )$ and a sequence $\{Z_k,\break k\ge
1\}$ of independent $N(0,1)$ r.v.'s. Then
\[
w_0(t)=\sum\limits
_{k=1}^\infty
\,Z_k\int\limits
_0^t\,\varphi_k(u)du,\quad t
\ge0,
\]
is a standard Wiener process with covariances $Ew_0(t)w_0(s)=\min\{t,s\}
$. We need some kind of the Wiener process on the real line $\mathbb
{R}$. Let $ \{w_1(t),\,t\ge0 \}$, $ \{w_2(t),\,t\ge0
\}$ be two independent Wiener processes
of the following form:
\[
w_i(t)=\sum\limits
_{k=1}^\infty
\,Z_{ik}\int\limits
_0^t\,\varphi _k(u)du,\quad t
\ge0,\ i=1,2,
\]
and $ \{Z_{ik},\,k\ge1,\,i=1,2 \}$ be independent $N(0,1)$
r.v.'s. Then the required Wiener process on $\mathbb{R}$ can be defined
as $w(t)=w_1(t)$, $t\ge0$, and $w(t)=w_2(|t|)$, $t<0$. For any real
$t_1<t_2\le t_3<t_4$
\begin{equation}
E \bigl(w(t_2)-w(t_1) \bigr) \bigl(w(t_4)-w(t_3)
\bigr)=0, \label{indep_incr_exle}
\end{equation}
i.e. increments are orthogonal. On the other hand, for any $t>s$
\[
E \bigl(w(t)-w(s) \bigr)^2=t-s.
\]

Let $ \{\xi_{ik},\,k\ge1,\,i=1,2 \}$ be i.i.d. s.s.-G. r.v.'s
(and non-Gaussian!) with unit variance. Some examples of s.s.-G. r.v.'s
can be found in \cite{BuKo_MCoRVRP}. Thus the Bernoulli r.v. and the
r.v. uniformly distributed in $ [-\sqrt{3},\,\sqrt{3} ]$ are
s.s.-G. and have unit variances.

Let us introduce the stochastic processes
\[
\xi_i(t)=\sum\limits
_{k=1}^\infty\,
\xi_{ik}\int\limits
_0^t\,\varphi _k(u)du,\quad t
\ge0,\ i=1,2,
\]
$\xi(t)=\xi_1(t)$, $t\ge0$, and $\xi(t)=\xi_2(|t|)$, $t<0$. Then $\xi
= \{\xi(t),\,t\in\mathbb{R} \}$ is a process with orthogonal
increments and is not a Gaussian one.

However, it is a j.s.s.-G. process. To prove this statement consider
arbitrary numbers $t_1<\cdots<t_n$, where the first $m$ numbers, $0\le
m\le n$, are negative and the rest $n-m$ numbers are positive. Let
$\varDelta= (\varDelta_1,\,\ldots,\,\varDelta_n )'\in\mathbb{R}^n$ be
any vector. Then
\[
\begin{aligned} \varSigma_2&=\sum
\limits_{k=1}^N
\,\xi_{2k} \Biggl(\sum\limits
_{i=1}^m\,
\varDelta_i\int\limits
_0^{ \llvert t_i \rrvert }\,\varphi_k(u)du
\Biggr)\to\sum\limits
_{i=1}^m\,\varDelta_i
\xi_2 \bigl( \llvert t_i \rrvert \bigr)\ \text {a.s.},\quad
\text{as}\ N\to\infty,
\\
\varSigma_1&=\sum\limits
_{k=1}^N\,
\xi_{1k} \Biggl(\sum\limits
_{i=m+1}^n\,
\varDelta_i\int\limits
_0^{t_i}\,\varphi_k(u)du
\Biggr)\to\sum\limits
_{i=m+1}^n\,\varDelta_i
\xi_1 (t_i )\ \text{a.s.},\quad \text{as}\ N\to \infty.
\end{aligned} %
\]
By the Fatou lemma
%
\begin{align*}
E\exp \Biggl\{\sum\limits
_{i=1}^n\,\varDelta_i
\xi(t_i) \Biggr\}&=E\underset {N\to\infty}\lim\,\exp \{
\varSigma_2+\varSigma_1 \}
\\
&\le\underset{N\to\infty}\liminf\,\prod\limits
_{k=1}^N
\, E\exp \Biggl\{ \xi_{2k} \Biggl(\sum\limits
_{i=1}^m
\,\varDelta_i\int\limits
_0^{ \llvert t_i \rrvert }\,\varphi_k(u)du
\Biggr) \Biggr\}\cdot
\\
&\qquad\cdot \prod\limits
_{k=1}^N\, E\exp \Biggl
\{\xi_{1k} \Biggl(\sum\limits
_{i=m+1}^n\,
\varDelta_i\int\limits
_0^{t_i}\,\varphi_k(u)du
\Biggr) \Biggr\}\le
\\
&\quad\le\underset{N\to\infty}\lim\,\exp \Biggl\{\frac{1}2 \Biggl(\sum
\limits
_{k=1}^N\, \Biggl(\sum\limits
_{i=1}^m
\,\varDelta_i\int\limits
_0^{ \llvert t_i \rrvert }\,\varphi_k(u)du
\Biggr)^2
\\
&\qquad+\sum\limits
_{k=1}^N\,
\Biggl(\sum\limits
_{i=m+1}^n\, \varDelta_i\int
\limits_0^{t_i}
\, \varphi_k(u)du \Biggr)^2 \Biggr) \Biggr\}\\
%
&\quad=\underset{N\to\infty}
\lim\,\exp \Biggl\{\frac{1}2 \Biggl(\sum\limits
_{i,j=1}^m
\, \Biggl(\sum\limits
_{k=1}^N\,\int\limits
_0^{ \llvert t_i \rrvert }
\,\varphi_k(u)du \int\limits
_0^{ \llvert t_j \rrvert }\,\varphi
_k(u)du \Biggr)\varDelta_i\varDelta_j
\\
&\qquad+\sum\limits
_{i,j=m+1}^n\,
\Biggl(\sum\limits
_{k=1}^N\,\int\limits
_0^{t_i}\,
\varphi_k(u)du \int\limits
_0^{t_j}\, \varphi_k(u)du
\Biggr)\varDelta_i\varDelta_j \Biggr) \Biggr\}.
\end{align*}
%
By Parseval's identity
\[
\begin{aligned} &\underset{N\to\infty}\lim\,\sum
\limits_{k=1}^N
\,\int\limits
_0^{ \llvert t_i \rrvert }\,\varphi_k(u)du \int\limits
_0^{ \llvert t_j \rrvert }
\,\varphi _k(u)du
\\
&\quad=\sum\limits
_{k=1}^\infty\,\int\limits
_0^\infty\,
\chi_{
[0,\, \llvert t_i \rrvert  ]}(u)\varphi_k(u)du \int\limits
_0^\infty\,
\chi_{ [0,\, \llvert t_j \rrvert  ]}(u)\varphi_k(u)du
\\
&\quad=\int\limits
_0^\infty\,\chi_{ [0,\, \llvert t_i \rrvert  ]}(u)\chi
_{ [0,\, \llvert t_j \rrvert  ]}(u)du=\min \bigl\{ \llvert t_i \rrvert ,\, \llvert
t_j \rrvert \bigr\}. \end{aligned} %
\]
Similarly
\[
\underset{N\to\infty}\lim\,\sum\limits
_{k=1}^N\,\int
\limits_0^{t_i}
\, \varphi_k(u)du \int\limits
_0^{t_j}\,\varphi_k(u)du=
\min \{t_i,\, t_j \}.
\]
It means that
\[
E\exp \Biggl\{\sum\limits
_{i=1}^n\,\varDelta_i
\xi(t_i) \Biggr\}\le\exp \biggl\{ \frac{1}2\anglel B
\varDelta,\,\varDelta\angler \biggr\}
\]
with
\[
B=\lleft( %
\begin{array}{cc} B_2 & 0 \\ 0 & B_1
\end{array} %
 \rright),
\]
where $B_2= (\min \{ \llvert t_i \rrvert ,\,  \llvert t_j \rrvert  \}
 )_{i,j=1}^m$, $B_1= (\min \{t_i,\,t_j \}
)_{i,j=m+1}^n$, and the process $\xi$ is j.s.s.-G.
\end{exm}





\end{document}